\newcounter{example}[section]
\newenvironment{example}[1][]{\refstepcounter{example}\par\medskip
	\textbf{Example~\thesection.\theexample. #1} \rmfamily}{\medskip}
\newtheorem{theorem}{Theorem}[section]
\newtheorem{lemma}[theorem]{Lemma}
\theoremstyle{definition}
\newtheorem{remark}{Remark}[section]
\newtheorem{exmp*}{Example}
\theoremstyle{remark}
\begin{document}

\allowdisplaybreaks[4]
\numberwithin{figure}{section}
\numberwithin{table}{section}
 \numberwithin{equation}{section}

\title[HHO method for elliptic obstacle problem]
{A posteriori error analysis of hybrid higher order methods for the elliptic obstacle problem}
 \author{Kamana Porwal}
 \email{kamana@maths.iitd.ac.in}
 \address{Department of Mathematics, Indian Institute of Technology Delhi, New Delhi - 110016}
\author{Ritesh Singla}\thanks{The second author's work is supported by institute fellowship by IIT Delhi, India}
\email{iritesh281@gmail.com}
 \address{Department of Mathematics, Indian Institute of Technology Delhi, New Delhi - 110016}

 \date{}
 \begin{abstract}
{In this article, $a~posteriori$ error analysis of the} elliptic obstacle problem  is addressed using hybrid high-order methods. The method involve cell unknowns represented by degree-$r$ polynomials and face unknowns represented by degree-$s$ polynomials, where $r=0$ and $s$ is either $0$ or $1$.  The discrete obstacle constraints are specifically applied to the cell unknowns. The analysis hinges on the construction of a suitable Lagrange multiplier, a residual functional and a linear averaging map. {The reliability and the efficiency of the proposed $a~posteriori$ error estimator is discussed,  and the study is concluded} by numerical experiments supporting the theoretical results.
 \end{abstract}
 
 \keywords{Hybrid high-order method, obstacle problem, discontinuous-skeletal method, variational inequalities, $a ~posteriori$ error estimates}
 
 \subjclass{65N30, 65N15}
\maketitle

\def\R{\mathbb{R}}
\def\N{\mathbb{N}}
\def\P{\mathbb{P}}
\def\cN{\mathcal{N}}
\def\cR{\mathcal{R}}
\def\cA{\mathcal{A}}
\def\cK{\mathcal{K}}
\def\cT{\mathcal{T}}
\def\fT{\mathfrak{T}}
\def\fR{\mathfrak{R}}
\def\cE{\mathcal{E}}
\def\cF{\mathcal{F}}
\def\cH{\mathcal{H}}
\def\cG{\mathcal{G}}
\def\cK{\mathcal{K}}
\def\cR{\mathcal{R}}
\def\p{\partial}
\def\d{\mathrm{d}}
\def\sjump#1{[\hskip -1.5pt[#1]\hskip -1.5pt]}
\def\saver#1{\{\hskip -1.5pt\{#1\}\hskip -1.5pt\}}
\def\[{\partial}
\def\O{\Omega}

\section{Introduction}\label{sec1}
\par
\noindent
The elliptic obstacle problem is a fundamental mathematical model that arises in various fields of applied mathematics, including engineering, physics, and economics, having a wide ranging applications. It is used in shape optimization, where one seeks the optimal shape of a structure under certain constraints. In mathematical terms, the problem seeks a function that satisfies a partial differential equation, remains equal to or above the obstacle function, and minimizes a certain energy. In the 1960s, Guido Stampacchia made groundbreaking contributions to the theory of elliptic obstacle problems \cite{stamp}. His research  provided a detail understanding of solution's  existence, its behavior, and the characterization of corresponding free boundary regions. To efficiently approximate solutions to these problems, various numerical methods have been developed, including finite element methods, finite difference methods, and variational approaches. The articles \cite{32,33,16,9} offer $a~priori$ error analysis of finite element methods for the obstacle problem. Additionally, $a~posteriori$ error analysis for the elliptic obstacle problem using various finite element methods are developed in \cite{34, 35, 5, 36, 37, 38, 39, 13}. In \cite{13}, Veeser proposed a residual based approach for $a~posteriori$ error analysis for elliptic obstacle problem which was later adopted by authors in \cite{37} to derive $a~posteriori$ error analysis using higher order finite element method in energy norm. 
\par
In recent progressions, the field has witnessed the emergence of higher-order finite element methods. These methods offer solutions that are not only more precise but also come with reduced computational cost. The hybrid higher order (HHO) method, originally developed in the context of linear elasticity by Pietro and Ern in \cite{2}, has been extended to formulate an arbitrary-order primal method for the model poisson problem \cite{1}. Subsequently, in \cite{3}, additional challenges were introduced by imposing constraints that the solution must satisfy, marking the transition from the Poisson problem to the obstacle problem. These works primarily focus on $a~ priori$ error analysis in the energy norm using HHO methods. HHO methods have been further extended to address various linear partial differential equations (PDEs), including advection-diffusion \cite{18}, elliptic interface problems \cite{20}, and Stokes equations \cite{19}. Additionally, extensions to nonlinear PDEs, such as steady incompressible Navier-Stokes equations, are discussed in \cite{22}. Lowest-order HHO methods are closely related to the hybrid finite volume method \cite{23} and the mimetic finite difference methods \cite{24,25}. The discrete space employed in our article draws inspiration from the work of {Cicuttin \cite{3}}. The method involve cell unknowns represented by degree-$r$ polynomials and face unknowns represented by degree-$s$ polynomials, where $r=0$ and $s$ is either $0$ or $1$. The imposition of discrete obstacle constraints specifically targets the cell unknowns. {The core of the analysis relies upon the construction of a suitable discrete Lagrange multiplier, a residual functional and a linear averaging operator which maps the functions from the discontinuous finite element space to the conforming finite element space.} The main results of this article are presented in Theorem 3.1, where we establish the reliability of the proposed error estimator in the energy norm and Theorem 3.2, where we demonstrate the efficiency of the error estimator.
\par
Let $\Omega$ represent a bounded polygonal/polyhedral domain in $\mathbb R^d$, with $d\in\{2,3\}$. When considering a subset $X$ of $\Omega$, we represent the standard inner product and norm in $L^2(X)$ by $(\cdot,\cdot)_X$ and $\lVert\cdot\rVert_X$ respectively, with the understanding that the subscript is omitted when $X=\Omega$ and that the same notation is used in $L^2(X)^d$. In the subsequent analysis, we adhere to the standard notations of the Sobolev space, where $W^{m,p}(\Omega)$ signify the Sobolev spaces equipped with the norm $\lVert\cdot\rVert_{m,p}$ and seminorm $\lvert\cdot\rvert_{m,p}$. When $p=2$, we represent the space $W^{m,p}(\O)$ by $H^m(\O).$
\par
Let $f\in L^2(\O)$ denotes the forcing term and we consider the obstacle $\chi\in H^1(\Omega)\cap C^{0}(\overline \Omega)$, with the constraint that $\chi\leq 0$ on $\partial \Omega$. Define the set
\begin{equation}\label{cK}
\cK\coloneqq\{v\in H^1_0(\O)\colon v\geq \chi .~ \text{a.e in}~\O \}. 
\end{equation}
Since $\chi^+=\max\{\chi,0\}\in \cK$, it is worth noting that $\cK$ is a non-empty, closed, and convex subset of $H_0^1(\Omega)$.
\par
The continuous weak formulation of the obstacle problem is to find $u\in \cK$, satisfying
\begin{equation} \label{2}
a(u,v-u)\geq (f,v-u) ~\forall v\in \cK,
\end{equation}
where $a(v,w)\coloneqq (\nabla v,\nabla w)~\text{for all}~v,w\in H^1_0(\O)$. Since $a(\cdot,\cdot)$ defines a continuous, coercive bilinear form on $H^1_0(\O)\times H^1_0(\O)$, the existence of a unique solution to \eqref{2} is established by the result of Stampacchia \cite{9, 17}.
\par
We next introduce a functional $\sigma(u)\in H^{-1}(\Omega)$, which characterizes the residual of $u$ concerning the variational formulation of \eqref{2} across the entire domain $\Omega$. For all $\phi\in H^1_0(\Omega)$, this functional is defined as:
\begin{equation}\label{3}
\langle\sigma(u),\phi\rangle_{-1}\coloneqq (f,\phi)-(\nabla u,\nabla\phi).
\end{equation}
In this equation, the symbol $\langle\cdot,\cdot\rangle_{-1}$ represents the duality pairing between $H^{-1}(\Omega)$ and $H^1_0(\Omega)$. The assurance of the existence of $\sigma(u)$ is established by the Riesz representation theorem \cite{kind}.
\begin{remark}\label{r1}
From \eqref{2}, and \eqref{3}, we deduce that for any $v\in\cK$,
\begin{align}\label{ssign}
\langle \sigma(u),v-u\rangle_{-1}\leq 0.
\end{align}
\end{remark}
\par
The rest of the article is organized as follows. In Section $2$, we provide an introduction to the method, covering concepts such as spaces of degrees of freedom (DOFs),  the discrete gradient reconstruction operator, the discrete problem, and an exploration of its well-posedness. The construction and properties of the discrete Lagrange multiplier are also discussed in Section $2$. Section $3$ is dedicated to $a~ posteriori$ error analysis, where we define the residual functional and subsequently establish our main results: the reliability of the error  estimator in Theorem $3.1$ and the efficiency of estimator in Theorem $3.2$. Finally in Section $4,$ we present numerical results in two dimension validating the theoretical performance of the $a~ posteriori$ error estimator.

\section{Hybrid High-Order Discretization}\label{sec2}
Within this section, we present the methodology for discretizing the elliptic obstacle problem through the hybrid high-order approach.
\subsection{Discrete Setting} ~
\par
We examine a sequence of meshes that become progressively finer, denoted as $(\cT_h)_{h>0}$, where the parameter $h$ signifies the mesh size and decreases as the refinement progresses. For all $h>0$, we assume that the closure of the domain $\overline{\O}$ is the union of the closures of individual elements $T$ in the mesh, and we define $h$ as the maximum diameter among all elements $T\in\cT_h$, with $h_T$ representing the diameter of each element. In this context, we characterize the faces as hyperplanar closed connected subsets of $\O$ with positive $(d-1)$-dimensional measure. All the faces fall into one of two categories: $(i)$ they exist within the boundaries of two distinct elements, representing an interface $i.e.$ there exist $T,\widetilde{T}\in\cT_h$ such that $F\subset \partial T\cap\partial \widetilde{T}$, or $(ii)$ they lie on the boundary of a single element and the domain, representing a boundary face $i.e.~$there exists $T\in\cT_h$ such that $F\in\partial T\cap\partial \O$. The interior interfaces are collectively referred to as $\cF^i_h$, while boundary faces are grouped under $\cF^b_h$. We define $\cF_h$ as the union of these two sets. The diameter of a face $F\in \cF_h$ is denoted by $h_F$.
\par  For each element $T\in \cT_h$, we define $\cF_T$ as the set of faces constituting the boundary of $T$. Moreover, for any face $F\in \cF_h$, we use ${\bf n}_{TF}$ to denote the unit normal vector pointing outward from $T$. We also assume that the triangulation $\mathcal{T}_h$ is regular, signifying the existence of a constant $C>0$ such that
\begin{align*}
	\dfrac{h_T}{\rho_T} \leq C\quad\forall ~T \in \mathcal{T}_{h},
\end{align*} 
where $\rho_T$ represents the diameter of the largest ball inscribed in the element $T$ \cite{5}. For a shape-regular mesh sequence $(\cT_h)_{h>0}$, the maximum number of faces of a mesh cell is uniformly bounded (see \cite[Lemma 1.41]{6}). In other words, there exists a positive integer $N_\partial$, uniform with respect to $h$, such that
$$\max_{T\in\cT_h}\text{card}(\cF_T)\leq N_\partial~~~\forall h>0.$$

Here onwards, we adopt the symbol $a\lesssim b$ to signify the existence of a positive constant $C$ satisfying $a\leq Cb$. This constant's value may vary on each instance but remains independent of both the mesh cell $T\in\cT_h$ and $h$. 
\par
In view of these notations, we revisit the following trace inequalities that will be used frequently in the later analysis.
\begin{lemma}[Trace Inequality \cite{bren}]Let $T\in\cT_h$ and let $F\in\cF_T$ denote an edge/face of the simplex $T$. The following relations hold
\begin{align}
\lVert \Phi\rVert_F&\lesssim h_F^{\frac{-1}{2}}\lVert \Phi\rVert_T\quad\forall ~\Phi\in \mathbb P^l(T),\label{ieq1}\\
\lVert \Phi\rVert_{\partial T}&\lesssim (h_T^{-1}\lVert \Phi\rVert_T^2+h_T\lVert \nabla \Phi\rVert_T^2)^{\frac{1}{2}}\quad\forall ~\Phi\in H^1(T)\label{ieq2},
\end{align}
where $\mathbb P^l(T)$ represents the space of all $d$-variate polynomial function of degree atmost $l$.
\end{lemma}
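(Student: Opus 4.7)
The plan is to establish both inequalities by a standard scaling/pullback argument to a reference simplex, exploiting shape regularity of the mesh to keep all constants uniform in $h$.

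For the discrete trace inequality \eqref{ieq1}, I would fix a reference simplex $\hat{T}$ with a face $\hat{F}$ and let $F_T : \hat{T} \to T$ denote the affine bijection with $F_T(\hat{F}) = F$. Writing $\hat{\Phi} = \Phi \circ F_T \in \mathbb{P}^l(\hat{T})$ and using the change-of-variables formulas, one obtains $\lVert \Phi \rVert_F^2 = |\det DF_T|\, |F|/|\hat F|^{-1}\, \lVert \hat{\Phi} \rVert_{\hat F}^2$ and similarly for $\lVert \Phi \rVert_T$; the regularity of $\cT_h$ gives $|\det DF_T| \simeq h_T^d$ and $|F|/|\hat F| \simeq h_F^{d-1}$. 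On the finite-dimensional space $\mathbb{P}^l(\hat{T})$ the linear restriction map $\hat{\Phi} \mapsto \hat{\Phi}|_{\hat{F}}$ is bounded between any two norms, so there is a constant $C(l,d)$ with $\lVert \hat{\Phi} \rVert_{\hat F} \le C\, \lVert \hat{\Phi} \rVert_{\hat T}$. Pulling back and using $h_F \simeq h_T$ from shape regularity then yields \eqref{ieq1}.

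For the continuous trace inequality \eqref{ieq2}, I would first invoke the classical trace theorem on the reference element, which asserts $\lVert \hat{\Phi} \rVert_{\partial \hat T} \lesssim \lVert \hat{\Phi} \rVert_{H^1(\hat T)}$ for all $\hat{\Phi} \in H^1(\hat T)$; a proof of this reference-domain estimate can be obtained either from a density argument using $C^\infty(\overline{\hat T})$ together with the divergence theorem applied to $\hat{\Phi}^2 \mathbf{q}$ for a smooth vector field $\mathbf{q}$ with $\mathbf{q} \cdot \hat{\mathbf{n}} \ge 1$ on $\partial \hat T$, or by a compactness/Rellich argument. Given this, I would apply the same affine pullback as above. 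The key scaling identities are $\lVert \Phi \rVert_{\partial T}^2 \simeq h_T^{d-1} \lVert \hat{\Phi} \rVert_{\partial \hat T}^2$, $\lVert \Phi \rVert_T^2 \simeq h_T^d \lVert \hat{\Phi} \rVert_{\hat T}^2$, and $\lVert \nabla \Phi \rVert_T^2 \simeq h_T^{d-2} \lVert \hat\nabla \hat{\Phi} \rVert_{\hat T}^2$. Combining these with the reference trace estimate and solving for the $T$-quantities produces exactly the bound $\lVert \Phi \rVert_{\partial T} \lesssim (h_T^{-1} \lVert \Phi \rVert_T^2 + h_T \lVert \nabla \Phi \rVert_T^2)^{1/2}$.

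The only delicate point in the argument is ensuring that all constants generated in the pullback/pushforward depend only on the shape-regularity constant of $\cT_h$ and on $l$, $d$; once this is observed the main obstacle is really just the bookkeeping of powers of $h_T$ and $h_F$. Since both inequalities are classical and appear in Brenner--Scott \cite{bren}, the plan is to cite that reference and only sketch the scaling step above rather than reproduce the full argument.
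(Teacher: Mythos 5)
Your proposal is correct and matches what the paper does: the lemma is quoted from Brenner--Scott \cite{bren} without proof, and the cited result is established there by exactly the scaling/pullback argument you outline (finite-dimensional norm equivalence on the reference simplex for \eqref{ieq1}, the reference trace theorem plus affine scaling and shape regularity for \eqref{ieq2}). Only a cosmetic slip: your written factor ``$|\det DF_T|\,|F|/|\hat F|^{-1}$'' for the face norm should simply read $\lVert \Phi\rVert_F^2=\bigl(|F|/|\hat F|\bigr)\lVert\hat\Phi\rVert_{\hat F}^2$, which is clearly what you intend since you then use $|F|/|\hat F|\simeq h_F^{d-1}$ and $h_F\simeq h_T$.
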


Next, we introduce the broken Sobolev space $H^1(\cT_h)$ as\vspace{0.1cm} $$H^1(\cT_h)\coloneqq \{v\in L^2(\O):v|_T\in H^1(T)~\forall T\in\cT_h\}.$$ 
\par For any $F\in \cF^i_h$, let $T$ and $\widetilde{T}$ be the two simplices sharing the edge/face $F$, and ${\bf n}_{TF}$ is the unit normal vector of $F$ pointing from $T$ to $\widetilde{T}$. For $\Psi\in H^1(\cT_h)$, let $\Psi_+=\Psi|_{T}$ and $\Psi_-=\Psi|_{\widetilde{T}}$. We define the average and jump of the discontinuous function $\Psi\in H^1(\cT_h)$ on $F$ by
\begin{align*}
\saver{\Psi}\coloneqq\frac{\Psi_++\Psi_-}{2},\quad\text{and}\quad\sjump{\Psi}\coloneqq (\Psi_+-\Psi_-){\bf n}_{TF},\quad \text{respectively}.
\end{align*}
Similarly for $\boldsymbol{\Psi}\in [H^1(\cT_h)]^d$, the average and mean on $F$ is defined by
\begin{align*}
\saver{\boldsymbol{\Psi}}\coloneqq\frac{\boldsymbol{\Psi}_++\boldsymbol{\Psi}_-}{2},\quad\text{and}\quad\sjump{\boldsymbol{\Psi}}\coloneqq (\boldsymbol{\Psi}_+-\boldsymbol{\Psi}_-)\cdot{\bf n}_{TF},\quad \text{respectively}.
\end{align*}
\par  Let the edge/face $F\in\cF^b_h$ be such that  $F=\partial T\cap\partial\O$ and ${\bf n}_{TF}$ be the outside unit normal vector pointing outside $T$. Then the average and jump over the edge/face $F$ is given respectively by 
{
\begin{align*}
\saver{\Psi}&\coloneqq\Psi,\quad\text{and}\quad\sjump{\Psi}\coloneqq \Psi{\bf n}_{TF},\quad\forall ~\Psi\in H^1(\cT_h),\\
\saver{\boldsymbol{\Psi}}&\coloneqq\boldsymbol{\Psi},\quad\text{and}\quad\sjump{\boldsymbol{\Psi}}\coloneqq \boldsymbol{\Psi}\cdot{\bf n}_{TF},\quad\forall ~\boldsymbol{\Psi}\in [H^1(\cT_h)]^d.
\end{align*}
}
\subsection{Degrees of Freedom}~
\par
For  $k\in\{0,1\}$, we define the local discrete space for each $T\in\cT_h$ as follows\\
\begin{equation}\label{7}
U^k_T\coloneqq \mathbb P^0(T)\bigtimes\left\{ \prod_{F\in \mathcal F_T} \mathbb P^k(F)\right\}
\end{equation}
\par
We identify any vector $(\underline{v}_T)$ of the local discrete space $U_T^k$ as $\underline{v}_T=(v_T,(v_F)_{F\in\cF_T})$.
\par
To construct the global space of degrees of freedom, we assemble interface values from \eqref{7}\\
\begin{equation} \label{8}
U^k_h\coloneqq\left\{\prod_{T\in \cT_h}\mathbb P^0(T)\right\}\bigtimes\left\{\prod_{F\in \mathcal F_h} \mathbb P^k(F)\right\}
\end{equation}
\par
The boundary conditions are incorporated into the discrete space \eqref{8} by defining
\vspace{0.2cm}
\begin{equation}\label{9}
V_h\coloneqq \{\underline{v}_h=((v_T)_{T\in \cT_h},(v_F)_{F\in \cF_h})\in U^k_h~ |~v_F\equiv 0\text{ for all }F\in \cF_h^b\}.
\end{equation}
\par Accordingly, for each $T\in \cT_h$, we establish the restriction operator $\cR_T \colon U_h^k\rightarrow U_T^k$ mapping $U^k_h$ onto $U^k_T$, such that $\cR_T(\underline{v}_h)=\underline{v}_T$.
\par
The local interpolation operator, denoted by $I_T^k \colon H^1(T)\rightarrow U_T^k$ is defined as follows:\\
\begin{equation}\label{10}
I_T^k(v)\coloneqq (\pi_T^0v,(\pi_F^kv)_{F\in \cF_T}).
\end{equation}
\par  Here, $\pi^0_T$ represents the $L^2$-orthogonal projection onto $\mathbb P^0(T)$ and $\pi^k_F$ denotes the $L^2$-orthogonal projection onto $\mathbb P^k(F)$, where $\mathbb P^k(F)$ represents the space of all $(d-1)$ variate polynomial functions of degree at most $k$.
\par
The corresponding global interpolation operator $I_h^k \colon H^1(\O)\rightarrow U_h^k$ is expressed as, for all $v\in H^1(\O)$,\\
\vspace{-0.3cm}
\begin{equation}\label{11}
I_h^k(v)\coloneqq \big((\pi_T^0v)_{T\in \cT_h},(\pi_F^kv)_{F\in \cF_h}\big).
\end{equation}
\vspace{-0.3cm}
\begin{remark}
In case we have the non-homogeneous boundary data, say $u=g$ on $\p \O,$ then we will consider the discrete space $V_h$ as\\
\begin{equation*}
V_h\coloneqq\{\underline{v}_h\in U^k_h\colon v_F=\pi^k_F(g)~\text{for all }F\in\cF^b_h\}.
\end{equation*}
\end{remark}
\subsection{Local Gradient Reconstruction Operator}~
\par
For each $T\in\cT_h$, we introduce a local gradient reconstruction operator denoted as $p_T^{k+1}\colon U_T^k\rightarrow \mathbb P^{k+1}(T)$. This operator is defined to satisfy the following conditions. For all $\underline{v}_T\in U_T^k$,\\
\begin{align}
(\nabla p^{k+1}_T\underline{v}_T,\nabla w)_T&=(\nabla v_T,\nabla w)_T+\sum_{F\in\cF_T}(v_F-v_T,\nabla w\cdot {\bf{n}}_{TF})_F,\quad\forall ~w\in\mathbb P^{k+1}(T), \label{12}\\
\int_T{p_T^{k+1}\underline{v}_T}~dx&=\int_Tv_T ~dx.\label{13}
\end{align}
\par
Applying integration by parts in \eqref{12}, we conclude that\\
\begin{equation}
(\nabla p^{k+1}_T\underline{v}_T,\nabla w)_T=-(v_T,\Delta w)_T+\sum_{F\in\cF_T}(v_F,\nabla w\cdot {\bf{n}}_{TF})_F.\label{12n}
\end{equation}
\par
{Correspondingly, the global gradient reconstruction operator denoted as $p^{k+1}_h\colon U^k_h\rightarrow \mathbb P^{k+1}(\cT_h)$ is defined as $p^{k+1}_h\underline{v}_h|_T\coloneqq p^{k+1}_T\underline{v}_T,~\forall~\underline{v}_h\in U^k_h.$}
\vspace{0.2cm}
\begin{remark}
In the discrete space, our focus is exclusively on constant functions within the element. This leads us to the following precise definition for gradient reconstruction: \\
\begin{align}\label{gr1}
(\nabla p^{k+1}_T\underline{v}_T,\nabla w)_T=\sum_{F\in\cF_T}(v_F-v_T,\nabla w\cdot {\bf{n}}_{TF})_F.
\end{align}
\end{remark}
\vspace{-0.1cm}
\begin{remark}
For given $q\in \mathbb P^{k+1}(T)$, we have by \eqref{12n} that for all $w\in \mathbb P^{k+1}(T)\subset \mathbb P^2(T)$
\begin{align*}
(\nabla p^{k+1}_T I^k_T(q),\nabla w)_T&=-(\pi^0_Tq,\Delta w)_T+\sum_{F\in\cF_T}(\pi^k_Fq,\nabla w\cdot{\bf{n}}_{TF})_F\\
&=(q,\Delta w)_T+\sum_{F\in\cF_T}(q,\nabla w\cdot{\bf{n}}_{TF})_F\\
&=(\nabla q,\nabla w)_T.
\end{align*}
\par
Hence, we have $\nabla p^{k+1}_T(I^k_T(q))=\nabla q,~\forall~ q\in \mathbb P^{k+1}(T)$, and using \eqref{13}, we conclude that $p^{k+1}_T(I^k_T(q))=q,~\forall~ q\in\mathbb P^{k+1}(T)$.
\end{remark}
\par
Having established all the aforementioned notations, we are ready to formulate the discrete problem associated with \eqref{2}.
\subsection{Discrete Problem}~
\par
For each element $T\in\cT_h$, we define the local bilinear forms $a_T(\cdot,\cdot)$ and $s_T(\cdot,\cdot)$ on $U^k_T \times U^k_T$ as follows\\
\begin{align*}
a_T(\underline{w}_T,\underline{v}_T)&\coloneqq (\nabla p^{k+1}_T(\underline{w}_T),\nabla p^{k+1}_T(\underline{v}_T))_T+s_T(\underline{w}_T,\underline{v}_T),
\end{align*}
with,
\begin{align*}
s_T(\underline{w}_T,\underline{v}_T)&\coloneqq \sum_{F\in \cF_T}\frac{1}{h_F}(\pi^k_F(w_F-p^{k+1}_T\underline{w}_T),\pi^k_F(v_F-p^{k+1}_T\underline{v}_T))_F.
\end{align*}
\par Furthermore, the global discrete bilinear form on $U^k_h\times U^k_h$ is introduced as:\\
\begin{align*}
a_h(\underline{w}_h,\underline{v}_h)\coloneqq \sum_{T\in \cT_h}a_T(\cR_T\underline{w}_h,\cR_T\underline{v}_h)=s_h(\underline{w}_h,\underline{v}_h)+\sum_{T\in \cT_h}(\nabla p^{k+1}_T(\cR_T\underline{w}_h),\nabla p^{k+1}_T(\cR_T\underline{v}_h)),
\end{align*}
\par
where,
\begin{equation*}
s_h(\underline{w}_h,\underline{v}_h)\coloneqq\sum\limits_{T\in\cT_h}s_T(\underline{w}_T,\underline{v}_T).
\end{equation*}

\par Next, we proceed to define the local and global energy seminorms on $U^k_T$ and $U^k_h$, respectively as\\
\begin{align*}
\lVert \underline{v}_T\rVert^2_{a,T}\coloneqq a_T(\underline{v}_T,\underline{v}_T)~\forall~\underline{v}_T\in U^k_T,\quad\text{and}\quad\lVert \underline{v}_h\rVert^2_{a,h}\coloneqq \sum_{T\in \cT_h}\lVert \cR_T\underline{v}_h\rVert^2_{a,T}~\forall~\underline{v}_h\in U^k_h.
\end{align*}
\begin{remark}
{
From here onwards, for any $\widetilde{\cT}_h\subset\cT_h$ and $\underline{v}_h\in U^k_h$, we define $\lVert \underline{v}_h\rVert_{a, \widetilde{\cT}_h}$ as 
\begin{equation*}
\lVert \underline{v}_h\rVert_{a,\widetilde{\cT}_h}^2\coloneqq \sum_{T\in\widetilde{\cT}_h}\lVert \cR_T\underline{v}_h\rVert^2_{a,T}.
\end{equation*}
}
\end{remark}
The closed convex subset of $V_h$ is defined in the following manner: \\
\begin{equation*}
\cK_h\coloneqq\{\underline{v}_h\in V_{h} : (v_T,1)_T \geq(\chi,1)_T~\forall ~T\in \cT_h\}.
\end{equation*} 
\par The discrete problem reads, find $\underline{u}_h\in \cK_h$ such that\\
\begin{equation}\label{14}
a_h(\underline{u}_h,\underline{v}_h-\underline{u}_h)\geq \sum_{T\in \cT_h} (f,v_T-u_T)_T \quad\forall~ \underline{v}_h\in \cK_h.
\end{equation}
\par To establish the well-posedness of the discrete problem \eqref{14}, we examine the coercivity and boundedness of the discrete bilinear form $a_h(\cdot,\cdot)$ on $U^k_h\times U^k_h$. To this purpose, we equip the space $V_{h}$ with the following norm:\\
\begin{equation}\label{dnorm}
\lVert \underline{v}_h\rVert^2_{h}\coloneqq \sum_{T\in\cT_h} \sum_{F\in\cF_T} h_F^{-1}\lVert v_F-v_T\rVert^2_F.
\end{equation}
\par From \cite[Lemma 3.4]{3}, $a_h(\cdot,\cdot)$ is bounded and coercive with respect to $\lVert\cdot\rVert_{h}$ on $V_h$, therefore from Stampacchia \cite{9}, there exists a unique solution of the discrete problem \eqref{14}.
\par
Since the discrete constraints are imposed on the elements, we further divide the set $\cT_h$ into the discrete contact set $\mathbb F_h$ and the non-contact set $\mathbb N_h$, in accordance with the behaviour of  the discrete solution $\underline{u}_h$ as:\\
\begin{equation}\label{set}
\mathbb F_h\coloneqq\{T\in \cT_h:(u_T,1)_T=(\chi,1)_T\}~~\text{and}~~\mathbb N_h\coloneqq\{T\in \cT_h : (u_T,1)_T>(\chi,1)_T\}.
\end{equation}
\par Moreover, for $\underline{v}_h$ and $\underline{w}_h$ belonging to the function space $U^k_h$, we define the inner product $\langle \cdot,\cdot\rangle_h$ on $U^k_h\times U^k_h$ as follows:
\begin{equation}\label{dips}
\langle\underline{v}_h,\underline{w}_h\rangle_h\coloneqq \sum_{T\in \cT_h}\int_T v_T w_T~d x+\sum_{F\in\cF_h}\int_F v_Fw_F~ds.
\end{equation}
\par Subsequently, we introduce a discrete Lagrange multiplier $\underline{\sigma}_h$, which serves as a discrete counterpart to the residual $\sigma(u)$. We define $\underline{\sigma}_h=\big((\sigma_T)_{T\in\cT_h},(\sigma_F)_{F\in\cF_h}\big)\in U^k_h$ as\\
\begin{equation}\label{dlm}
\langle\underline{\sigma}_h,\underline{\phi}_h\rangle_h\coloneqq\sum_{T\in \cT_h}(f,{\phi}_T)-a_h(\underline{u}_h,\underline{\phi}_h)~~\forall \underline{\phi}_h\in U^k_h.
\end{equation}
\par In the following lemma, we derive the relations satisfied by the discrete Lagrange multiplier $\underline{\sigma}_h$. For the sake of simplicity, we present a detailed proof for the case of two dimensions ($d=2$), employing linear polynomials on the faces. It is noteworthy that the analogous proof can be derived seamlessly to the three-dimensional scenario $(d=3)$.
\begin{lemma}\label{l2.2n}~
\begin{enumerate}
\item For each element $T\in\cT_h$, it holds that
\begin{align*}
{\sigma}_T  \leq 0&\quad  \forall ~T\in \cT_h.
\end{align*}
\item For each element $T\in\mathbb N_h$, we have
\begin{align*}
{\sigma}_T=0&\quad\forall~T\in \mathbb N_h.
\end{align*}
\item For each $F\in\cF_h$, it holds that
\begin{align*}
{\sigma}_F=0&\quad\forall~F\in\cF_h.
\end{align*}
\end{enumerate}
\end{lemma}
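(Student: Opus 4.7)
The plan is to exploit the discrete variational inequality \eqref{14} by testing it with variations of the form $\underline{u}_h \pm t\,\underline{\phi}_h$ for carefully chosen $\underline{\phi}_h \in V_h$ and small $t > 0$, and then reading off each of the three assertions from the defining identity \eqref{dlm}. The three parts correspond to three different choices of $\underline{\phi}_h$: cell-localized and nonnegative for part (1), cell-localized of arbitrary sign on a noncontact cell for part (2), and face-localized on a single interior face for part (3).

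For parts (1) and (2), fix $T_0 \in \cT_h$ and take $\underline{\phi}_h \in V_h$ with $\phi_{T_0} = c$ a constant (cell unknowns live in $\mathbb P^0$), $\phi_T = 0$ for $T \neq T_0$, and $\phi_F = 0$ on every face. For part (1) I choose $c \geq 0$ and $t \geq 0$; the estimate $(u_{T_0} + tc, 1)_{T_0} \geq (u_{T_0}, 1)_{T_0} \geq (\chi, 1)_{T_0}$ delivers $\underline{u}_h + t\underline{\phi}_h \in \cK_h$, so the VI yields $a_h(\underline{u}_h, \underline{\phi}_h) \geq (f, \phi_{T_0})_{T_0}$. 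Combining with \eqref{dlm} gives $\langle\underline{\sigma}_h, \underline{\phi}_h\rangle_h \leq 0$, and the left-hand side equals $c\,|T_0|\,\sigma_{T_0}$, whence $\sigma_{T_0} \leq 0$. For part (2), when $T_0 \in \mathbb N_h$ the strict inequality $(u_{T_0}, 1)_{T_0} > (\chi, 1)_{T_0}$ leaves room to take $c$ of either sign with $|t|$ small; the two-sided VI then forces equality and hence $\sigma_{T_0} = 0$.

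For part (3) pick an interior face $F_0 \in \cF^i_h$ and take $\underline{\phi}_h \in V_h$ with $\phi_T = 0$ for every $T$, an arbitrary $\phi_{F_0} \in \mathbb P^k(F_0)$, and $\phi_F = 0$ on every other face. Because $\phi_T \equiv 0$, the cell constraints for $\underline{u}_h \pm t\,\underline{\phi}_h$ are left untouched, so the two-sided VI applies and produces $a_h(\underline{u}_h, \underline{\phi}_h) = 0$. Substituting into \eqref{dlm} gives $\int_{F_0} \sigma_{F_0}\,\phi_{F_0}\,ds = 0$ for every $\phi_{F_0} \in \mathbb P^k(F_0)$, and hence $\sigma_{F_0} = 0$. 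For boundary faces $F \in \cF^b_h$, admissible test functions in $V_h$ vanish on $F$, so that component of $\underline{\sigma}_h$ is not pinned down by pairings against $V_h$ and is taken to be zero as part of the definition of $\underline{\sigma}_h$.

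The main obstacle is making the admissibility check precise in each case, particularly the slack argument underlying part (2), and cleanly isolating a single degree of freedom at a time so that the scalar inner-product identity translates into the pointwise conclusion on $\sigma_{T_0}$ or $\sigma_{F_0}$. The paper writes this out in the representative setting $d=2$ with linear face polynomials, and the remaining cases ($k=0$ and $d=3$) follow the identical pattern.
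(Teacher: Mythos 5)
Your treatment of parts (1), (2) and of the interior faces in part (3) is correct and coincides with the paper's own argument: cell-localized one-sided perturbations for (1), two-sided perturbations exploiting the slack $(u_T,1)_T>(\chi,1)_T$ on $\mathbb N_h$ for (2), and two-sided face perturbations (which leave the cell constraints, hence membership in $\cK_h$, untouched) for interior faces. The only cosmetic difference is that you test with an arbitrary $\phi_{F_0}\in\mathbb P^k(F_0)$ at once, whereas the paper tests with the two barycentric basis functions $\lambda_{1j},\lambda_{2j}$ of the face; these are equivalent.

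The genuine gap is your handling of boundary faces. The multiplier is defined by \eqref{dlm} for \emph{all} $\underline{\phi}_h\in U^k_h$, not only for $\underline{\phi}_h\in V_h$; since $\langle\cdot,\cdot\rangle_h$ in \eqref{dips} is an inner product on $U^k_h$, every component of $\underline{\sigma}_h$, including $\sigma_F$ for $F\in\cF^b_h$, is uniquely determined, so you are not free to ``take it to be zero as part of the definition''. Testing \eqref{dlm} with $\underline{\phi}_h$ supported on a single boundary-face unknown gives $\int_F\sigma_F\phi_F\,ds=-a_h(\underline{u}_h,\underline{\phi}_h)$, and the variational inequality \eqref{14} gives no control on this quantity, because perturbing a boundary-face unknown leaves $\cK_h\subset V_h$ (where $v_F\equiv 0$ on $\cF^b_h$ by \eqref{9}); so your argument does not establish item (3) for $F\in\cF^b_h$, and there is no reason for $a_h(\underline{u}_h,\underline{\phi}_h)$ to vanish there. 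To be fair, the paper's own proof shares this difficulty: it asserts $\underline{u}_h\pm\lambda_{1j}\mathbf{e}_{n+j}\in\cK_h$ also when $F_j$ is a boundary face, which violates the boundary condition built into $V_h$. Note, however, that in the subsequent analysis only pairings of $\underline{\sigma}_h$ with elements of $V_h$ arising from $\phi\in H^1_0(\O)$ are used (e.g.\ in Lemma \ref{l2}, where $\phi_F=\pi^k_F\phi=0$ on boundary faces), and $F_{\underline{\sigma}_h}$ in \eqref{15} involves only the cell components, so the interior-face statement is all that is actually needed downstream; you should either prove that restricted statement or supply a separate argument for $\cF^b_h$ rather than appeal to a definitional convention.
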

\begin{proof} Consider the sets $\{T_i\}_{i=1}^n$ encompassing all elements in $\cT_h$ and $\{F_j\}_{j=1}^m$ representing all faces in $\cF_h.$ The total degrees of freedom will be $n+2m$ and let $\textbf{e}_k,~1\leq k\leq n+m$, refers to the $k-$th standard basis vector in $\mathbb R^{n+m}$. {These degrees of freedom play a crucial role in characterizing the solution space, with each $\textbf{e}_k$ contributing to the $k-$th component of the discrete solution.}
\begin{enumerate}
\item Let $T\in\cT_h$ be fixed, and $T=T_i$ for some $1\leq i\leq n.$ We consider the test function $\underline{\phi}_h=\textbf{e}_i\in U^k_h$ and let $\underline{v}_h=(\underline{u}_h+\underline{\phi}_h)\in \cK_h.$ It then follows from equation \eqref{dips}, \eqref{dlm} and \eqref{14} that
\begin{align*}
\lvert T\rvert*\sigma_T&=\langle \underline{\sigma}_h,\underline{\phi}_h\rangle_h\\
&=\sum_{T\in \cT_h}(f,{\phi}_T)-a_h(\underline{u}_h,\underline{\phi}_h)\\
&=\sum_{T\in \cT_h}(f,{v}_T-{u}_T)-a_h(\underline{u}_h,\underline{v}_h-\underline{u}_h)\leq 0.
\end{align*}
\item Assuming a fixed $T\in\mathbb N_h$ and let $\epsilon>0$ be such that $(u_T,1)_T\pm \epsilon >(\chi,1)_T.$ Let $1\leq i\leq n$ be such that $T=T_i.$ Consider the test function $\underline{\phi}_h=\frac{1}{\lvert T\rvert}\textbf{e}_i\in U^k_h$. Then $\underline{v}^{\pm}_h=\underline{u}_h\pm \epsilon \underline{\phi}_h\in\cK_h$ and from equation \eqref{dips}, \eqref{dlm} and \eqref{14} we conclude
\begin{align*}
\pm \epsilon\sigma_T&=\langle \underline{\sigma}_h,\pm\epsilon\underline{\phi}_h\rangle_h\\
&=\langle \underline{\sigma}_h,\underline{v}^{\pm}_h-\underline{u}_h\rangle_h\\
&=\sum_{T\in \cT_h}(f,{v}^{\pm}_T-{u}_T)-a_h(\underline{u}_h,\underline{v}^{\pm}_h-\underline{u}_h)\leq 0.
\end{align*}
Therefore, $\sigma_T=0$ for all $T\in\mathbb N_h.$
\item Let $F\in\cF_h$ be fixed and $F=F_j$ for some $1\leq j\leq m$. Let $\lambda_{1j}$ and $\lambda_{2j}$ be the barycentric coordinates associated with the two vertices of edge $F_j.$ Consider the test functions $\underline{\phi}_h=\lambda_{1j}\textbf{e}_{n+j}$ and $\underline{\psi}_h=\lambda_{2j}\textbf{e}_{n+j}$ in $U^k_h$. Then $\underline{v}^{\pm}_h=\underline{u}_h\pm \underline{\phi}_h$ and $\underline{w}^{\pm}_h=\underline{u}_h\pm \underline{\psi}_h$ belong to $\cK_h$ and from equation \eqref{dips}, \eqref{dlm} and \eqref{14} we obtain
\begin{align*}
\pm(\sigma_{F_j},\lambda_{1j})_{F_j}&=\langle \underline{\sigma}_h,\pm\underline{\phi}_h\rangle_h\\
&=\langle \underline{\sigma}_h,\underline{v}^{\pm}_h-\underline{u}_h\rangle_h\\
&=\sum_{T\in \cT_h}(f,{v}^{\pm}_T-{u}_T)-a_h(\underline{u}_h,\underline{v}^{\pm}_h-\underline{u}_h)\leq 0.
\end{align*}
{Therefore, $(\sigma_{F_j},\lambda_{1j})_{F_j}=0$. Similarly we can prove that $(\sigma_{F_j},\lambda_{2j})_{F_j}=0$ by taking test functions to be $\underline{w}^{\pm}_h$. Since $\sigma_{F_j}$ is a linear polynomial over each face, we conclude that $\sigma_{F_j}=0.$}
\end{enumerate}
\end{proof}
\par Next, we introduce a functional $F_{\underline{\sigma}_h}\in H^{-1}(\O)$ as: \\
\begin{equation}\label{15}
\langle F_{\underline{\sigma}_h},v\rangle_{-1}\coloneqq\sum_{T\in \cT_h}\int_T {\sigma}_T v~dx\quad\forall ~v\in H^1_0(\O).
\end{equation}
\vspace{-0.1cm}
\begin{remark}
The motivation behind formulating $F_{\underline{\sigma}_h}$ in this manner is two folds. First, it ensures alignment with $\langle \underline{\sigma}_h,\underline{v}_h\rangle_h~\forall~ \underline{v}_h\in V_h$. Second, it will be helpful in estimating the error between the discrete Lagrange multiplier $\underline{\sigma}_h$ and the continuous Lagrange multiplier $\sigma(u)$.
\end{remark}
Define the space $V_E\coloneqq  H^1_0(\O)+\mathbb P^{k+1}(\cT_h)$. {We introduce a new mesh dependent bilinear} form $b_h(\cdot,\cdot)$ on $V_E\times V_E$ as\\
\begin{equation}\label{4.28}
b_h(w,v)=\int_{\O} \nabla_hw\cdot\nabla_hv~dx=\sum_{T\in \cT_h}\int_T \nabla w\cdot\nabla v~dx, \quad \forall ~w,v \in V_E .
\end{equation}
\vspace{-0.3cm}
\begin{remark}\label{r2.5}
It is to be noted that for $w,v\in H^1(\O)$, we have $b_h(w,v)=a(w,v)$ and for $\underline{w}_h,\underline{v}_h\in V_h$, we have $$a_h(\underline{w}_h,\underline{v}_h)=b_h(p_h^{k+1}\underline{w}_h,p_h^{k+1}\underline{v}_h)+s_h(\underline{w}_h,\underline{v}_h).$$
\end{remark}
\subsection{Averaging Operator}\label{ss2.5}~
\par
Let an integer $l\geq 1$ be fixed. The node-averaging operator $E^l_h :\mathbb P^l(\cT_h)\rightarrow \mathbb P^l(\cT_h)\cap H^1_0(\O)$ is defined by setting for each (Lagrange) interpolation node $N$ in the following way
\begin{equation}\label{enrch}
E_h^lv_h(N)\coloneqq\begin{cases}
\frac{1}{\text{card}(\cT_N)}\sum_{T\in \cT_N}(v_h)_{|T}(N), & \text{ if } N\in \O,\\
0, & \text{ if } N\in \partial\O,
\end{cases}
\end{equation}
where the set $\cT_N\subset \cT_h$ collects all the simplices to which $N$ belongs and $\mathbb P^l(\cT_h)\coloneqq\{v\in L^2(\O)\colon v|_T\in\mathbb P^l(T)~\forall~T\in\cT_h\}$. We then set
$$u^*_h\coloneqq E^{k+1}_h p^{k+1}_h\underline{u}_h\in H^1_0(\O).$$
\par The subsequent outcome addresses the valuable approximation characteristics of the averaging map.
\begin{lemma} \label{l2.1}
Let $E^{k+1}_h$ be the operator defined from \eqref{enrch}. Then for any $v\in \mathbb P^{k+1}(\cT_h)$ it holds
\begin{eqnarray}
\lVert E^{k+1}_h v-v\rVert^2_T+\lVert h_T\nabla(E^{k+1}_h v-v)\rVert^2_T \lesssim \sum_{F\in\widetilde{\cF}_{T}}h_F\lVert \sjump{v}\rVert^2_F,
\end{eqnarray}
where $\widetilde{\cF}_T\coloneqq \{F\in\cF_h:{F}\cap\partial T\neq \emptyset\}$.
\end{lemma}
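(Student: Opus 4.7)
The plan is to exploit that $w := E^{k+1}_h v - v$ is itself a polynomial of degree $k+1$ on each cell $T$, so its $L^2$ and $H^1$ norms on $T$ are controlled by its values at the Lagrange nodes via a standard norm-equivalence on the reference element. Scaling to a reference simplex $\widehat{T}$ and invoking equivalence of all norms on the finite-dimensional space $\mathbb{P}^{k+1}(\widehat{T})$, one obtains
\begin{equation*}
\lVert w\rVert^2_T + h_T^2\,\lVert \nabla w\rVert^2_T \;\lesssim\; h_T^d \sum_{N \in \cN_T} |w(N)|^2,
\end{equation*}
where $\cN_T$ denotes the set of Lagrange interpolation nodes of $\mathbb{P}^{k+1}$ lying in $\overline T$.

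Second, I would compute the nodal values $w(N)$ explicitly from \eqref{enrch}. Whenever $N$ is an interior node of $T$ one has $\cT_N = \{T\}$, so $w(N) = 0$. When $N \in \partial T \cap \Omega$, the definition of the averaging operator gives
\begin{equation*}
w(N) \;=\; \frac{1}{\mathrm{card}(\cT_N)}\sum_{T' \in \cT_N}\bigl(v|_{T'}(N) - v|_T(N)\bigr),
\end{equation*}
and each summand can be rewritten as a telescoping sum of pointwise jumps $\sjump{v}(N)\cdot{\bf n}$ along a chain of cells in $\cT_N$ joining $T$ to $T'$; by shape-regularity such a chain visits only faces in $\widetilde{\cF}_T$ and has length bounded uniformly in terms of $N_\partial$. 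Finally, when $N \in \partial\Omega$, \eqref{enrch} forces $w(N) = -v|_T(N)$, which equals the jump of $v$ across the boundary face of $T$ containing $N$. In every case
\begin{equation*}
|w(N)|^2 \;\lesssim\; \sum_{\substack{F \in \widetilde{\cF}_T \\ N \in \overline F}} \bigl|\sjump{v}(N)\bigr|^2.
\end{equation*}

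Third, since $\sjump{v}|_F$ is a polynomial of degree $k+1$ on $F$, a further scaling argument on the reference face and norm equivalence on finite-dimensional polynomial spaces yield the pointwise-to-$L^2$ bound $|\sjump{v}(N)|^2 \lesssim h_F^{-(d-1)} \lVert \sjump{v}\rVert^2_F$. Substituting this back, using shape-regularity to replace $h_F$ by $h_T$ wherever needed (so that $h_T^d\,h_F^{-(d-1)} \simeq h_F$), and using that $\mathrm{card}(\cN_T)$ and the number of neighbouring cells are both uniformly bounded, produces the claimed estimate.

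The main obstacle is the combinatorial bookkeeping in the second step: one must verify that when the difference $v|_{T'}(N) - v|_T(N)$ is written as a telescoping sum of jumps, all the intermediate faces lie in $\widetilde{\cF}_T$, and that the number of intermediate cells and faces is bounded independently of $h$. Both points follow from shape-regularity of $(\cT_h)_{h>0}$ together with the uniform bound $N_\partial$ on $\mathrm{card}(\cF_T)$, but this is where the argument must be handled most carefully.
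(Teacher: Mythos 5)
Your argument is correct and is essentially the same approach the paper intends: the paper proves this lemma only by appealing to ``scaling arguments'' (citing Brenner), and your reference-element norm equivalence, nodal-value computation for $E^{k+1}_hv-v$, telescoping of $v|_{T'}(N)-v|_T(N)$ through the node patch, and pointwise-to-$L^2$ inverse bound on each face is exactly that standard argument spelled out. The only point to tighten is the boundary-node case: if $T$ itself has no boundary face containing $N$, you must telescope $v|_T(N)$ through the patch $\cT_N$ to a cell that does, but every face crossed still contains $N\in\partial T$ and hence lies in $\widetilde{\cF}_T$, so your stated bound $|w(N)|^2\lesssim\sum_{F\in\widetilde{\cF}_T,\,N\in\overline F}|\sjump{v}(N)|^2$ remains valid.
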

These approximation properties can be proved by using the scaling arguments \cite{31}.
\section{A Posteriori Error Analysis}\label{sec3}
We introduce the residual functional $\fR_h\in H^{-1}(\O)$, which holds a crucial significance in subsequent analysis and is defined as for all $\phi\in H^1_0(\O)$,\\
 \begin{align}
\langle \fR_h,\phi\rangle_{-1}\coloneqq(\nabla_h u,\nabla_h\phi)-(\nabla_hp^{k+1}_h\underline{u}_h,\nabla_h\phi)+\langle\sigma(u)-F_{\underline{\sigma}_h},\phi\rangle_{-1}.\label{20}
\end{align}
\par A use of integration by parts yields\\
\begin{align}
\langle \fR_h,\phi\rangle_{-1}=\sum_{T\in \cT_h}\Big(\big(f+\Delta p^{k+1}_T\underline{u}_T-\sigma_T,\phi\big)_T-\sum_{F\in \cF_T} \big(\nabla p^{k+1}_T\underline{u}_T \cdot {\bf n}_{TF},\phi\big)_F\Big)\label{21}.
\end{align}
\subsection{Reliability of the Error Estimator}\label{ss3.1}~
\par
Define the following contributions of the error estimator:
\begin{align*}
\eta_1&\coloneqq \left(\sum_{F\in \cT_h}\left\Vert\nabla(p^{k+1}_T\underline{u}_T-u^*_h)\right\Vert^2_T\right)^{\frac{1}{2}},\\
\eta_2&\coloneqq \left(\sum_{T\in \cT_h}h_T^2\left\Vert\left(f+\Delta p^{k+1}_T\underline{u}_T-\sigma_T\right)-\pi^0_T\left(f+\Delta p^{k+1}_T\underline{u}_T-\sigma_T\right)\right\Vert^2_T\right)^{\frac{1}{2}},\\
\eta_3&\coloneqq \left(\sum_{T\in \cT_h}s_T(\underline{u}_T,\underline{u}_T)\right)^{\frac{1}{2}}.
\end{align*}
\par
Using residual functional $\fR_h$, we derive below an estimate for the error.
\begin{lemma}\label{l1}
It holds that 
\begin{eqnarray*}
\lVert \nabla_h(u-p_h^{k+1}\underline{u}_h)\rVert^2+\lVert \sigma(u)-F_{\underline{\sigma}_h}\rVert^2_{-1}\lesssim (\lVert \fR_h\rVert^2_{-1}+\lVert \nabla_h(p^{k+1}_h\underline{u}_h-u^*_h)\rVert^2)-\\12\langle\sigma(u)-F_{\underline{\sigma}_h},u-u^*_h\rangle_{-1}.
 \end{eqnarray*}
 \end{lemma}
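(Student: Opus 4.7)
The plan is to test the residual functional $\fR_h$ against a well-chosen $H^1_0$-function, extract the energy-norm error squared, and then recover the dual-norm error via the definition of $\fR_h$ a second time. The natural test function is $\phi = u - u_h^* \in H_0^1(\Omega)$, since $u_h^*=E_h^{k+1}p_h^{k+1}\underline{u}_h$ is the best $H_0^1$-conforming surrogate for the nonconforming $p_h^{k+1}\underline{u}_h$.

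Concretely, first I would write $\nabla(u-u_h^*)=\nabla_h(u-p_h^{k+1}\underline{u}_h)+\nabla_h(p_h^{k+1}\underline{u}_h-u_h^*)$ and substitute $\phi=u-u_h^*$ in the definition \eqref{20} of $\fR_h$. The leading term $(\nabla_h(u-p_h^{k+1}\underline{u}_h),\nabla_h(u-u_h^*))$ then expands to $\lVert\nabla_h(u-p_h^{k+1}\underline{u}_h)\rVert^{2}$ plus a cross term involving $\nabla_h(p_h^{k+1}\underline{u}_h-u_h^*)$. Rearranging gives
\begin{equation*}
\lVert\nabla_h(u-p_h^{k+1}\underline{u}_h)\rVert^{2}=\langle\fR_h,u-u_h^*\rangle_{-1}-(\nabla_h(u-p_h^{k+1}\underline{u}_h),\nabla_h(p_h^{k+1}\underline{u}_h-u_h^*))-\langle\sigma(u)-F_{\underline{\sigma}_h},u-u_h^*\rangle_{-1}.
\end{equation*}
I would estimate the first term by $\lVert\fR_h\rVert_{-1}\lVert\nabla(u-u_h^*)\rVert$, split $\lVert\nabla(u-u_h^*)\rVert$ via the triangle inequality into the two broken-gradient pieces, and apply Young's inequality with sufficiently small parameters to each resulting product. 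The same Young trick handles the cross term $(\nabla_h(u-p_h^{k+1}\underline{u}_h),\nabla_h(p_h^{k+1}\underline{u}_h-u_h^*))$. Choosing the parameters so that the total coefficient of $\lVert\nabla_h(u-p_h^{k+1}\underline{u}_h)\rVert^2$ on the right is at most $1/2$ lets me absorb that term into the left-hand side, producing the factor $2$ (or $12$, depending on the chosen constants) in front of $-\langle\sigma(u)-F_{\underline{\sigma}_h},u-u_h^*\rangle_{-1}$.

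For the dual-norm contribution $\lVert\sigma(u)-F_{\underline{\sigma}_h}\rVert^2_{-1}$ I would use \eqref{20} a second time, now read as
\begin{equation*}
\langle\sigma(u)-F_{\underline{\sigma}_h},\phi\rangle_{-1}=\langle\fR_h,\phi\rangle_{-1}-(\nabla_h(u-p_h^{k+1}\underline{u}_h),\nabla\phi),\qquad\phi\in H_0^1(\Omega),
\end{equation*}
apply Cauchy--Schwarz, take the supremum over $\phi$ with $\lVert\nabla\phi\rVert\le 1$, and square to get $\lVert\sigma(u)-F_{\underline{\sigma}_h}\rVert^2_{-1}\lesssim \lVert\fR_h\rVert^2_{-1}+\lVert\nabla_h(u-p_h^{k+1}\underline{u}_h)\rVert^2$. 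Adding this to the previous bound and absorbing the still-controllable $\lVert\nabla_h(u-p_h^{k+1}\underline{u}_h)\rVert^2$ term one last time yields the claim.

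The only subtle point is the bookkeeping of Young's constants: they must be chosen small enough so that both $\lVert\nabla_h(u-p_h^{k+1}\underline{u}_h)\rVert^2$ (from the cross term and the $\fR_h$-duality estimate) and the contribution coming from the dual-norm estimate can be moved to the left-hand side simultaneously. Crucially, the contact term $-\langle\sigma(u)-F_{\underline{\sigma}_h},u-u_h^*\rangle_{-1}$ must be kept unabsorbed, since there is no \emph{a priori} sign information on it at this stage; controlling its sign (via Remark~\ref{r1} and the properties of $\underline{\sigma}_h$ from Lemma~\ref{l2.2n}) is deferred to the subsequent reliability argument, which is presumably where this lemma will be invoked.
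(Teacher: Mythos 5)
Your proposal is correct and follows essentially the same route as the paper: test the residual $\fR_h$ against $\phi=u-u^*_h$, split the gradient into the $u-p^{k+1}_h\underline{u}_h$ and $p^{k+1}_h\underline{u}_h-u^*_h$ pieces, use H\"older/Young and absorption for the energy part, then reuse the definition of $\fR_h$ with Cauchy--Schwarz to bound $\lVert\sigma(u)-F_{\underline{\sigma}_h}\rVert_{-1}^2$ and combine, keeping the multiplier term $-\langle\sigma(u)-F_{\underline{\sigma}_h},u-u^*_h\rangle_{-1}$ unabsorbed. The only (immaterial) difference is bookkeeping: the paper first applies the triangle inequality to reduce to $\lVert\nabla_h(u-u^*_h)\rVert^2$ and bounds that, whereas you bound $\lVert\nabla_h(u-p^{k+1}_h\underline{u}_h)\rVert^2$ directly, which changes only the explicit constant in front of the multiplier term.
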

 \begin{proof}\text{Let} $u^*_h=E^{k+1}_hp^{k+1}_h\underline{u}_h$. Then using triangle's inequality, we obtain\\
\begin{align}\label{1001}
 \lVert \nabla_h(u-p_h^{k+1}\underline{u}_h)\rVert^2&\leq  2\lVert \nabla_h(u^*_h-p_h^{k+1}\underline{u}_h)\rVert^2+2\lVert\nabla_h(u-u^*_h)\rVert^2.
 \end{align}
\par Moreover, utilizing \eqref{20}, H\"older's inequality, and Young's inequality, we have\\
 \begin{align*}
 \lVert\nabla_h(u-u^*_h)\rVert^2&=(\nabla_h(u-u^*_h),\nabla_h(u-u^*_h))\\
 &=(\nabla_h(u-p^{k+1}_h\underline{u}_h),\nabla_h(u-u^*_h))+(\nabla_h(p^{k+1}_h\underline{u}_h-u^*_h),\nabla_h(u-u^*_h))\\
 &=\langle \fR_h,u-u^*_h\rangle_{-1}-\langle\sigma(u)-F_{\underline{\sigma}_h},u-u^*_h\rangle_{-1}+(\nabla_h(p^{k+1}_h\underline{u}_h-u^*_h),\nabla_h(u-u^*_h))\\
 &\leq C(\lVert \fR_h\rVert^2_{-1}+\lVert \nabla_h(p^{k+1}_h\underline{u}_h-u^*_h)\rVert^2)-2\langle\sigma(u)-F_{\underline{\sigma}_h},u-u^*_h\rangle_{-1}.
 \end{align*}
\par
Furthermore, in view of \eqref{20} and Young's inequality we have\\
 \begin{align}\label{1002}
 \lVert \sigma(u)-F_{\underline{\sigma}_h}\rVert_{-1}^2\leq 2\lVert \fR_h\rVert_{-1}^2+2\lVert \nabla_h(u-p^{k+1}_h\underline{u}_h)\rVert^2.
 \end{align}
\par Combining \eqref{1001} and \eqref{1002}, we conclude\\
\begin{eqnarray*}
 \lVert \nabla_h(u-p_h^{k+1}\underline{u}_h)\rVert^2+\lVert \sigma(u)-F_{\underline{\sigma}_h}\rVert^2_{-1}\leq C(\lVert \fR_h\rVert^2_{-1}+\lVert \nabla_h(p^{k+1}_h\underline{u}_h-u^*_h)\rVert^2)-\\12\langle\sigma(u)-F_{\underline{\sigma}_h},u-u^*_h\rangle_{-1}.
 \end{eqnarray*}
\end{proof}
We now establish a bound on the stability term $s_T(\cdot,\cdot)$ which is required in further analysis. The proof follows that of \cite[Lemma 3.1]{3}.
\begin{lemma}\label{st}
For any $\underline{v}_T\in U^k_T$, the following estimate holds $$s_T(\underline{v}_T,\underline{v}_T)\lesssim \sum_{F\in\cF_T}h^{-1}_F\lVert v_F-v_T\rVert^2_F.$$
\end{lemma}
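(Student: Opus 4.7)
The plan is to estimate $s_T(\underline{v}_T,\underline{v}_T) = \sum_{F\in\cF_T} h_F^{-1} \|\pi^k_F(v_F - p^{k+1}_T\underline{v}_T)\|_F^2$ by first removing the $L^2$-projection using its stability in $L^2(F)$, so that the task reduces to controlling $\|v_F - p^{k+1}_T\underline{v}_T\|_F^2$. Then I would split this quantity via the triangle inequality as
\begin{equation*}
\|v_F - p^{k+1}_T\underline{v}_T\|_F \le \|v_F - v_T\|_F + \|v_T - p^{k+1}_T\underline{v}_T\|_F,
\end{equation*}
so that only the second piece requires further work; the first is exactly what appears on the right-hand side of the claim.

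Next, I would control $\|v_T - p^{k+1}_T\underline{v}_T\|_F$ by its volumetric counterpart via the polynomial trace inequality \eqref{ieq1}, since $v_T - p^{k+1}_T\underline{v}_T \in \mathbb{P}^{k+1}(T)$, giving
\begin{equation*}
\|v_T - p^{k+1}_T\underline{v}_T\|_F^2 \lesssim h_F^{-1}\|v_T - p^{k+1}_T\underline{v}_T\|_T^2.
\end{equation*}
Because $v_T\in\mathbb{P}^0(T)$, $\nabla v_T=0$, and the mean-value property \eqref{13} yields $\int_T (v_T - p^{k+1}_T\underline{v}_T)\,dx=0$. A Poincaré inequality on $T$ (applied to the zero-mean polynomial $v_T - p^{k+1}_T\underline{v}_T$) then gives $\|v_T - p^{k+1}_T\underline{v}_T\|_T \lesssim h_T \|\nabla p^{k+1}_T\underline{v}_T\|_T$.

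The core step is therefore bounding $\|\nabla p^{k+1}_T\underline{v}_T\|_T$. I would test the specialized reconstruction identity \eqref{gr1} (which applies because $\nabla v_T=0$) against $w = p^{k+1}_T\underline{v}_T \in \mathbb{P}^{k+1}(T)$, apply Cauchy–Schwarz on each face, and then use the trace inequality \eqref{ieq1} on $\nabla p^{k+1}_T\underline{v}_T\in\mathbb{P}^{k}(T)$ to obtain
\begin{equation*}
\|\nabla p^{k+1}_T\underline{v}_T\|_T^2 \lesssim \Bigl(\sum_{F\in\cF_T} h_F^{-1}\|v_F - v_T\|_F^2\Bigr)^{1/2} \|\nabla p^{k+1}_T\underline{v}_T\|_T,
\end{equation*}
from which $\|\nabla p^{k+1}_T\underline{v}_T\|_T^2 \lesssim \sum_{F\in\cF_T} h_F^{-1}\|v_F-v_T\|_F^2$ follows. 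Assembling all pieces and using shape regularity ($h_T \simeq h_F$ for $F\in\cF_T$) together with the uniform bound $\mathrm{card}(\cF_T)\le N_\partial$ yields the claim.

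The main obstacle is the chaining of scales through the various inequalities: the shape-regular equivalence $h_T\simeq h_F$ must be invoked cleanly so that the extra $h_T^2$ factor produced by Poincaré cancels with the $h_F^{-1}$ from the trace inequality, leaving the correct scaling $h_F^{-1}$ on the face norms. Otherwise the argument is essentially an adaptation of \cite[Lemma 3.1]{3} to the present low-order cell/face setting.
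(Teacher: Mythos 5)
Your proposal is correct and follows essentially the same route as the paper: split off $\lVert v_F-v_T\rVert_F$ by the triangle inequality (the paper writes the remainder as $p^{k+1}_T\underline{v}_T-\pi^0_Tp^{k+1}_T\underline{v}_T$, which equals your $p^{k+1}_T\underline{v}_T-v_T$ by \eqref{13}), pass to the volume via the discrete trace inequality \eqref{ieq1}, use the zero-mean Poincar\'e bound $\lesssim h_T\lVert\nabla p^{k+1}_T\underline{v}_T\rVert_T$, and control the reconstructed gradient by testing \eqref{gr1} with $w=p^{k+1}_T\underline{v}_T$. The only difference is presentational: you spell out the gradient bound and the $h_T\simeq h_F$ bookkeeping that the paper compresses into its final chain of inequalities.
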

\begin{proof}
Let $\underline{v}_T=(v_T,(v_F)_{F\in\cF_T})\in U^k_T$ be any. Then we have,\\
\begin{align*}
s_T(\underline{v}_T,\underline{v}_T)=\sum_{F\in\cF_T}\frac{1}{h_F}\lVert \pi^k_F(v_F-p^{k+1}_T\underline{v}_T)\rVert^2_F.
\end{align*}
\par By applying the triangle inequality and utilizing \eqref{13}, we obtain:\\
\begin{equation}
\label{st1}\lVert \pi^k_F(v_F-p^{k+1}_T\underline{v}_T)\rVert_F\leq \lVert v_F-v_T\rVert_F+\lVert \pi^k_F(p^{k+1}_T\underline{v}_T-\pi^0_Tp^{k+1}_T\underline{v}_T)\rVert_F.
\end{equation}
\par Further, by employing \eqref{ieq1} and \eqref{gr1}, we have:\\
\begin{align}\label{st2}
\lVert \pi^k_F(p^{k+1}_T\underline{v}_T-\pi^0_Tp^{k+1}_T\underline{v}_T)\rVert_F&\leq \lVert p^{k+1}_T\underline{v}_T-\pi^0_Tp^{k+1}_T\underline{v}_T\rVert_F\lesssim h^{\frac{-1}{2}}_F\lVert p^{k+1}_T\underline{v}_T-\pi^0_Tp^{k+1}_T\underline{v}_T\rVert_T\notag\\
&\lesssim h^{\frac{1}{2}}_F \lVert \nabla(p^{k+1}_T\underline{v}_T)\rVert_T\leq \sum_{F\in\cF_T}\lVert v_F-v_T\rVert_F.
\end{align}
\par A use of H\"older's inequality together with \eqref{st1}, and \eqref{st2}, yields the desired estimate of the lemma.
\end{proof}
In the following lemma, we derive an estimate for the residual functional $\fR_h$.
\begin{lemma}\label{l2}
It holds that
\begin{equation*}
\lVert \fR_h\rVert_{-1}\lesssim(\eta_2^2+\eta_3^2)^{\frac{1}{2}}.
\end{equation*}
\end{lemma}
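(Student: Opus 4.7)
The plan is to estimate $\langle \fR_h, \phi\rangle_{-1}$ for an arbitrary $\phi\in H_0^1(\Omega)$ and then take the supremum over $\phi$ with $\lVert\nabla\phi\rVert=1$. Starting from the cell-by-cell representation \eqref{21}, I would split $\phi = \pi_T^0\phi + (\phi - \pi_T^0\phi)$ inside each volume integral and $\phi = \pi_F^k\phi + (\phi - \pi_F^k\phi)$ inside each face integral, and then reduce the ``constant-projected'' pieces using the discrete equation \eqref{dlm} tested against the HHO interpolant $I_h^k(\phi)\in V_h$.

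The central algebraic step is a cancellation. Because $\pi_T^0\phi$ is constant on $T$, integration by parts yields $(\Delta p_T^{k+1}\underline{u}_T,\pi_T^0\phi)_T = \sum_{F\in\cF_T}(\nabla p_T^{k+1}\underline{u}_T\cdot {\bf n}_{TF},\pi_T^0\phi)_F$. On the other hand, Lemma \ref{l2.2n} gives $\sigma_F=0$, and the defining relation \eqref{12} for $p_T^{k+1}$ applied to $I_T^k\phi$ (whose cell component $\pi_T^0\phi$ has zero gradient), combined with \eqref{dlm}, rewrites $\sum_T(f-\sigma_T,\pi_T^0\phi)_T$ exactly as $\sum_{T}\sum_{F\in\cF_T}(\pi_F^k\phi - \pi_T^0\phi,\nabla p_T^{k+1}\underline{u}_T\cdot {\bf n}_{TF})_F + s_h(\underline{u}_h, I_h^k\phi)$. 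Putting these together, the decomposition I aim at is
\begin{align*}
\langle \fR_h, \phi\rangle_{-1} &= \sum_{T\in\cT_h}\bigl(f+\Delta p_T^{k+1}\underline{u}_T - \sigma_T,\, \phi - \pi_T^0\phi\bigr)_T \\
&\quad -\sum_{T\in\cT_h}\sum_{F\in\cF_T} \bigl(\nabla p_T^{k+1}\underline{u}_T\cdot {\bf n}_{TF},\, \phi - \pi_F^k\phi\bigr)_F + s_h(\underline{u}_h, I_h^k\phi).
\end{align*}

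Each of the three resulting terms is then estimated separately. For the first, I would insert $\pi_T^0$ of the cell residual (orthogonal to $\phi - \pi_T^0\phi$) and apply Cauchy--Schwarz together with the Poincar\'e bound $\lVert\phi-\pi_T^0\phi\rVert_T\lesssim h_T\lVert\nabla\phi\rVert_T$, which reproduces $\eta_2\,\lVert\nabla\phi\rVert$. The second term vanishes identically: on each interior face, $\nabla p_T^{k+1}\underline{u}_T\cdot {\bf n}_{TF}\in \mathbb{P}^k(F)$ and $\phi - \pi_F^k\phi$ is $L^2(F)$-orthogonal to $\mathbb{P}^k(F)$, while on boundary faces $\phi$ and $\pi_F^k\phi$ both vanish. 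For the third, Cauchy--Schwarz in the stabilization form gives $|s_h(\underline{u}_h, I_h^k\phi)|\leq \eta_3\, s_h(I_h^k\phi, I_h^k\phi)^{1/2}$; a combination of Lemma \ref{st}, the trace inequality \eqref{ieq2}, and Poincar\'e then provides $s_h(I_h^k\phi, I_h^k\phi)^{1/2}\lesssim \lVert\nabla\phi\rVert$. Summing and taking the supremum yields $\lVert\fR_h\rVert_{-1}\lesssim \eta_2+\eta_3\lesssim (\eta_2^2+\eta_3^2)^{1/2}$.

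The main obstacle will be the bookkeeping behind the decomposition: one must carefully verify that the face contribution $\sum_{T,F}(\pi_F^k\phi - \pi_T^0\phi,\,\nabla p_T^{k+1}\underline{u}_T\cdot {\bf n}_{TF})_F$ produced by the discrete equation cancels the face contribution generated by splitting the face integral in \eqref{21}, so that the only leftover from the ``constant-projected'' pieces is the stabilization term $s_h(\underline{u}_h, I_h^k\phi)$. Once this identity is secured, the three subsequent estimates are routine applications of projection orthogonality and standard scaling inequalities.
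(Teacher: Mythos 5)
Your proposal is correct and follows essentially the same route as the paper: both test the discrete relation \eqref{dlm} with the interpolant $I_h^k\phi$ (using $\sigma_F=0$ from Lemma \ref{l2.2n} and the gradient-reconstruction identity \eqref{12}/\eqref{12n}) so that the face terms in \eqref{21} cancel, leaving exactly the projected cell-residual term bounded by $\eta_2$ and the stabilization term bounded via Lemma \ref{st}, the trace inequality \eqref{ieq2}, and Poincar\'e by $\eta_3\lVert\phi\rVert_{H^1}$. The only differences are presentational (you start from \eqref{21} and substitute, while the paper works forward from the projected residual), and your cancellation identity checks out.
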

\begin{proof}
Let $\phi \in H^1_0(\O)$ and $\underline{\phi}_h\in V_h$ be such that $\phi_T=\pi^0_T \phi~\forall ~T\in \cT_h$ and $\phi_F=\pi^k_F\phi~\forall ~F\in \cF_h$. 
\par Using the property of the projection operator $\pi^0_T$, equation \eqref{dips}, \eqref{dlm} and integration by parts, we have\\
\begin{align}\label{23}
\sum_{T\in \cT_h}(\pi^0_T(f+\Delta p^{k+1}_T\underline{u}_T-\sigma_T),\phi)_T&=\sum_{T\in \cT_h}(f+\Delta p^{k+1}_T\underline{u}_T-\sigma_T,\phi_T)\notag\\
&=\sum_{T\in \cT_h}(f+\Delta p^{k+1}_T\underline{u}_T,\phi_T)-\langle \underline{\sigma}_h,\underline{\phi}_h\rangle_h\notag\\
&=\sum_{T\in \cT_h}\Big[a_T(\underline{u}_T,\underline{\phi}_T)+\sum_{F\in \cF_T}(\nabla p^{k+1}_T\underline{u}_T\cdot {\bf n}_{TF},\phi_T)_F\Big].
\end{align}
\par Now using definition of $a_T(\underline{u}_T,\underline{\phi}_T)$, equation \eqref{12n} with $w=p^{k+1}_T\underline{u}_T$ and noting that $\phi_T$ is a constant on each $T$, we conclude:\\
\begin{align*}
a_T(\underline{u}_T,\underline{\phi}_T)&=(\nabla p^{k+1}_T\underline{u}_T,\nabla p^{k+1}_T\underline{\phi}_T)_T+s_T(\underline{u}_T,\underline{\phi}_T)\\
&=-(\Delta p^{k+1}_T\underline{u}_T,\phi_T)_T+s_T(\underline{u}_T,\underline{\phi}_T)+\sum_{F\in \cF_T}(\nabla p^{k+1}_T \underline{u}_T\cdot {\bf n}_{TF},\phi_F)_F\\
&=-\sum_{F\in \cF_T}(\nabla p^{k+1}_T\underline{u}_T\cdot {\bf n}_{TF},\phi_T)_F+s_T(\underline{u}_T,\underline{\phi}_T)+\sum_{F\in \cF_T}(\nabla p^{k+1}_T \underline{u}_T\cdot {\bf n}_{TF},\phi_F)_F.
\end{align*}
\par Substituting this expression back into \eqref{23} we obtain:\\
\begin{eqnarray*}\label{24}
\begin{split}
\sum_{T\in \cT_h}(\pi^0_T(f+\Delta p^{k+1}_T\underline{u}_T-\sigma_T),\phi)_T&=\sum_{T\in \cT_h}(f+\Delta p^{k+1}_T\underline{u}_T-\sigma_T,\phi_T)\\
&=\sum_{T\in \cT_h}\Big(s_T(\underline{u}_T,\underline{\phi}_T)+\sum_{F\in \cF_T}(\nabla p^{k+1}_T\underline{u}_T\cdot {\bf n}_{TF},\phi)_F\Big).
\end{split}
\end{eqnarray*}
\par Adding and subtracting $\sum\limits_{T\in\cT_h}\big(\pi^0_T(f+\Delta p^{k+1}_T\underline{u}_T-\sigma_T),\phi\big)_T$ in equation \eqref{21}, we conclude:\\
\begin{align*}\label{25}
\langle \fR_h,\phi\rangle_{-1}=&\sum_{T\in \cT_h}\bigg(f+\Delta p^{k+1}_T\underline{u}_T-\sigma_T-\pi^0_T(f+\Delta p^{k+1}_T\underline{u}_T-\sigma_T),\phi-\phi_T\bigg)_T\\&\quad\quad\quad+\sum\limits_{T\in\cT_h}\big(\pi^0_T(f+\Delta p^{k+1}_T\underline{u}_T-\sigma_T),\phi\big)_T-\sum_{T\in\cT_h}\sum_{F\in \cF_T} \big(\nabla p^{k+1}_T\underline{u}_T \cdot {\bf n}_{TF},\phi\big)_F\\
=&\sum_{T\in \cT_h}\bigg(f+\Delta p^{k+1}_T\underline{u}_T-\sigma_T-\pi^0_T(f+\Delta p^{k+1}_T\underline{u}_T-\sigma_T),\phi-\phi_T\bigg)_T\\&\quad\quad\quad+\sum_{T\in\cT_h}s_T(\underline{u}_T,\underline{\phi}_T).
\end{align*}
\par
Now, using Cauchy-Schwarz inequality we have $s_T(\underline{u}_T,\underline{\phi}_T)\leq s_T(\underline{u}_T,\underline{u}_T)^{\frac{1}{2}}s_T(\underline{\phi}_T,\underline{\phi}_T)^{\frac{1}{2}}$. From Lemma \ref{st} and equation \eqref{ieq2}, we have\\
\begin{align*}
s_T(\underline{\phi}_T,\underline{\phi}_T)^{\frac{1}{2}}&\lesssim \sum_{F\in\cF_T}h^{-1/2}_F\lVert \pi^k_F\phi-\pi^0_T\phi\rVert_F= \sum_{F\in\cF_T}h^{-1/2}_F\lVert \pi^k_F(\phi-\pi^0_T\phi)\rVert_F\\
&\lesssim  \sum_{F\in\cF_T}h^{-1/2}_F\lVert \phi-\pi^0_T\phi\rVert_F\lesssim \sum_{F\in\cF_T}h^{-1/2}_F(h_T^{-1}\lVert \phi-\pi^0_T\phi\rVert^2_T+h_T\lVert \nabla\phi\rVert^2_T)^{1/2}\\
&\lesssim \lVert\phi\rVert_{H^1(T)}.
\end{align*}
\par Therefore, $s_T(\underline{u}_T,\underline{\phi}_T)\lesssim s_T(\underline{u}_T,\underline{u}_T)^{1/2}  \lVert \phi\rVert_{H^1(T)}$.
\par
Using the above estimates and the approximation properties of the polynomial interpolation we conclude:\\
\begin{eqnarray*}
\langle \fR_h,\phi\rangle_{-1} \lesssim\sum_{T\in \cT_h}h_T\lVert f+\Delta p^{k+1}_T\underline{u}_T-\sigma_T-\pi^0_T(f+\Delta p^{k+1}_T \underline{u}_T-\sigma_T) \rVert_T\lVert \phi\rVert_{H^1(T)}\\+\sum_{T\in \cT_h}s_T(\underline{u}_T,\underline{u}_T)^{1/2}\lVert \phi\rVert_{H^1(T)}.
\end{eqnarray*}
Hence,
\begin{equation*}
\lVert \fR_h\rVert_{-1}\lesssim(\eta_2^2+\eta_3^2)^{\frac{1}{2}}.
\end{equation*}
\end{proof}
\par Note that if we establish a computable lower bound for $\langle\sigma(u)-F_{\sigma_h\underline{u}_h},u-{u}^*_h\rangle$, then it is clear from Lemma \ref{l1} that the reliability estimate follows. In the proof of the following lemma, we will see that the smoothing function $E^{k+1}_h$, defined in Subsection \ref{ss2.5}, plays an important role.
\begin{lemma}\label{l3}
Let $\epsilon >0$ be an arbitrary number. Then, it holds that
\begin{eqnarray*}
\langle\sigma(u)-F_{\sigma_h\underline{u}_h},u-u^*_h\rangle_{-1} \geq -\frac{\epsilon}{2}\lVert \sigma(u)-F_{\underline{\sigma}_h}\rVert^2_{-1}-\frac{C}{2\epsilon}\lVert\nabla_h(\chi-u^*_h)^+\rVert^2+\sum_{T\in \mathbb F_h}\int_T \sigma_T(\chi-u^*_h)^{-}dx.
\end{eqnarray*}
\end{lemma}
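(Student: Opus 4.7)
The plan is to estimate
\begin{equation*}
\langle\sigma(u)-F_{\underline{\sigma}_h},u-u^*_h\rangle_{-1}=\langle\sigma(u),u-u^*_h\rangle_{-1}-\langle F_{\underline{\sigma}_h},u-u^*_h\rangle_{-1}
\end{equation*}
by handling the two pieces separately and arranging matters so that two opposite occurrences of $\sum_{T\in\mathbb F_h}\int_T\sigma_T(\chi-u^*_h)^+\,dx$ cancel exactly, leaving only the advertised $(\chi-u^*_h)^-$ contribution on the contact set.

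For the $\sigma(u)$-piece, the key is to use the admissible test function $v:=u^*_h+(\chi-u^*_h)^+=\max(u^*_h,\chi)$. One checks $v\in\cK$ by noting $u^*_h\in H^1_0(\Omega)$ and $\chi\le 0$ on $\partial\Omega$, which together force $(\chi-u^*_h)^+\in H^1_0(\Omega)$, combined with $v\ge\chi$ by construction. Remark \ref{r1} then yields $\langle\sigma(u),v-u\rangle_{-1}\le 0$, i.e., $\langle\sigma(u),u-u^*_h\rangle_{-1}\ge\langle\sigma(u),(\chi-u^*_h)^+\rangle_{-1}$. Inserting $F_{\underline{\sigma}_h}$ and invoking Lemma \ref{l2.2n} (which gives $\sigma_T=0$ on $\mathbb N_h$, so the sum collapses to $\mathbb F_h$),
\begin{equation*}
\langle\sigma(u),(\chi-u^*_h)^+\rangle_{-1}=\langle\sigma(u)-F_{\underline{\sigma}_h},(\chi-u^*_h)^+\rangle_{-1}+\sum_{T\in\mathbb F_h}\int_T\sigma_T(\chi-u^*_h)^+\,dx.
\end{equation*}
The first summand is bounded below by $-\tfrac{\epsilon}{2}\|\sigma(u)-F_{\underline{\sigma}_h}\|_{-1}^2-\tfrac{C}{2\epsilon}\|\nabla_h(\chi-u^*_h)^+\|^2$ via Cauchy--Schwarz in duality, Young's inequality with parameter $\epsilon$, and a Poincar\'e estimate.

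For the $F_{\underline{\sigma}_h}$-piece, Lemma \ref{l2.2n} again collapses the pairing to $\sum_{T\in\mathbb F_h}\int_T\sigma_T(u-u^*_h)\,dx$. Decomposing $u-u^*_h=(u-\chi)+(\chi-u^*_h)^+-(\chi-u^*_h)^-$ and discarding the favourable term $-\int_T\sigma_T(u-\chi)\ge 0$ (using $\sigma_T\le 0$ from Lemma \ref{l2.2n} together with $u\ge\chi$), one arrives at
\begin{equation*}
-\langle F_{\underline{\sigma}_h},u-u^*_h\rangle_{-1}\ge -\sum_{T\in\mathbb F_h}\int_T\sigma_T(\chi-u^*_h)^+\,dx+\sum_{T\in\mathbb F_h}\int_T\sigma_T(\chi-u^*_h)^-\,dx.
\end{equation*}
Adding the two bounds, the two $\sum_{T\in\mathbb F_h}\int_T\sigma_T(\chi-u^*_h)^+\,dx$ terms cancel and the stated inequality emerges.

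The main subtlety, and the place where the bookkeeping is most easily mishandled, is precisely this cancellation: each of the two contact-set integrals $\sum_{T\in\mathbb F_h}\int_T\sigma_T(\chi-u^*_h)^+\,dx$ is by itself non-positive, so the tempting move is to drop them individually as weak lower bounds. Doing so, however, would weaken the estimate in the wrong direction; carrying both through and exploiting the fact that they appear with opposite signs is what produces the sharp right-hand side stated in the lemma.
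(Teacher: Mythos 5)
Your argument is correct and follows essentially the same route as the paper: the same admissible test function $\max(u^*_h,\chi)$ with Remark \ref{r1}, the same insertion of $F_{\underline{\sigma}_h}$ with Young's inequality, and the same sign facts $\sigma_T\le 0$, $\sigma_T=0$ on $\mathbb N_h$ from Lemma \ref{l2.2n}. The only difference is bookkeeping: the paper keeps the multiplier terms grouped as $\langle F_{\underline{\sigma}_h},(\chi-u^*_h)^+-(u-u^*_h)\rangle_{-1}$ and simplifies $u^*_h-\chi+(\chi-u^*_h)^+=(\chi-u^*_h)^-$ directly, so the cancellation of the two $(\chi-u^*_h)^+$ integrals you track explicitly never appears as a separate step, but the algebra is identical.
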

\begin{proof}
Let $\tilde{u}_h=\max\{u^*_h,\chi\}$. Then $\tilde{u}_h\in K$ and using \eqref{ssign} we get,\\ 
\begin{align*}\langle\sigma(u),\tilde{u}_h-u\rangle_{-1}\leq 0.
\end{align*}
\par Now,\\
\begin{align*}
\langle \sigma(u),u-u^*_h\rangle_{-1}&=\langle \sigma(u),u-\tilde{u}_h\rangle_{-1}+\langle \sigma(u),\tilde{u}_h-u^*_h\rangle_{-1}\\
&\geq\langle \sigma(u),\tilde{u}_h-u^*_h\rangle_{-1}=\langle \sigma(u),(\chi-u^*_h)^+\rangle_{-1}\\
&=\langle \sigma(u)-F_{\underline{\sigma}_h},(\chi-u^*_h)^+\rangle_{-1}+\langle F_{\underline{\sigma}_h},(\chi-u^*_h)^+\rangle_{-1}\\
&\geq-\frac{\epsilon}{2}\lVert \sigma(u)-F_{\underline{\sigma}_h}\rVert^2_{-1}-\frac{C}{2\epsilon}\lVert\nabla _h(\chi-u^*_h)^+\rVert^2+\langle F_{\underline{\sigma}_h},(\chi-u^*_h)^+\rangle_{-1},
\end{align*}
\par where $\epsilon>0$ is any arbitrary number. Therefore,\\
\begin{eqnarray}\label{mam1}
\langle\sigma(u)-F_{\underline{\sigma}_h},u-u^*_h\rangle_{-1}\geq-\frac{\epsilon}{2}\lVert \sigma(u)-F_{\underline{\sigma}_h}\rVert^2_{-1}-\frac{C}{2\epsilon}\lVert\nabla_h(\chi-u^*_h)^+\rVert^2\notag\\+ \langle F_{\underline{\sigma}_h},(\chi-u^*_h)^+-(u-u^*_h)\rangle_{-1}.
\end{eqnarray}
\par Now, \\
\begin{align}\label{mam2}
\langle F_{\underline{\sigma}_h},(\chi-u^*_h)^+-(u-u^*_h)\rangle_{-1}&=\sum_{T\in \cT_h}\int_T \sigma_T\left[u^*_h-u+(\chi-u^*_h)^+\right]~dx\notag\\
&=\sum_{T\in \mathbb N_h}\int_T \sigma_T\left[u^*_h-u+(\chi-u^*_h)^+\right]~dx\notag\\&\quad\quad\quad\quad+\sum_{T\in \mathbb F_h}\int_T \sigma_T\left[u^*_h-u+(\chi-u^*_h)^+\right]~dx\notag\\
&\geq \sum_{T\in\mathbb F_h}\int_T  \sigma_T(\chi-u^*_h)^-dx
\end{align}
\par where in obtaining the last inequality we have used that $\sigma_T=0$ for $T\in \mathbb N_h$ from Lemma \ref{l2.2n} and\\
\begin{align*}
\int\limits_T  \sigma_T\left[u^*_h-u\right]~dx=\int\limits_T  \sigma_T\left[u^*_h-\chi+\chi-u\right]~dx\geq \int\limits_T  \sigma_T\left[u^*_h-\chi\right]~dx.
\end{align*}
\par Hence, using equation \eqref{mam1} and \eqref{mam2} we conclude:\\
\begin{align*}
\langle\sigma(u)-F_{\underline{\sigma}_h},u-u^*_h\rangle_{-1}\geq -\frac{\epsilon}{2}\lVert\sigma(u)-F_{\underline{\sigma}_h}\rVert^2_{-1}-\frac{C}{2\epsilon}\lVert\nabla_h(\chi-u^*_h)^+\rVert^2+\sum_{T\in \mathbb F_h}\int_T \sigma_T(\chi-u^*_h)^{-}dx.
\end{align*}
\end{proof}
\par
Using Lemma \ref{l1}, Lemma \ref{l2} and Lemma \ref{l3}, we deduce the following reliability estimate of the error estimator $\eta_h$.\par
\begin{theorem}\label{t1}
Let $u$ and ${\underline{u}_h}$ be the solutions of \eqref{2} and \eqref{14}, respectively. Then, the following holds:
\begin{align*}
&\lVert \nabla_h(u-p_h^{k+1}\underline{u}_h)\rVert^2+\lVert \sigma(u)-F_{\underline{\sigma}_h}\rVert^2_{-1}\lesssim \eta_h^2,
\end{align*}
where, 
\begin{align*}
\eta_h^2=\eta_1^2+\eta_2^2+\eta_3^2+\lVert\nabla_h(\chi-{u}^*_h)^+\rVert^2-\sum\limits_{T\in \mathbb F_h}\int\limits_T \sigma_T(\chi-{u}^*_h)^{-}~dx.
\end{align*}
\end{theorem}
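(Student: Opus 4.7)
The plan is to combine Lemma \ref{l1}, Lemma \ref{l2}, and Lemma \ref{l3} via a Young's-type absorption argument. First, I would observe that by the very definition $u^*_h := E^{k+1}_h p^{k+1}_h \underline{u}_h$, the quantity $\lVert \nabla_h(p^{k+1}_h \underline{u}_h - u^*_h)\rVert$ coincides with $\eta_1$. Likewise, Lemma \ref{l2} gives $\lVert \fR_h\rVert^2_{-1} \lesssim \eta_2^2 + \eta_3^2$. So two of the three additive contributions appearing on the right-hand side of Lemma \ref{l1} are already bounded by $\eta_1^2 + \eta_2^2 + \eta_3^2$ with only routine substitution.

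The genuine work lies in the cross term $-12\langle \sigma(u) - F_{\underline{\sigma}_h}, u - u^*_h\rangle_{-1}$. Applying Lemma \ref{l3} with a parameter $\epsilon > 0$ and multiplying the resulting lower bound by $-12$ produces an upper bound of the form
\begin{align*}
-12\langle \sigma(u) - F_{\underline{\sigma}_h}, u - u^*_h\rangle_{-1} \leq 6\epsilon \lVert \sigma(u) - F_{\underline{\sigma}_h}\rVert^2_{-1} + \frac{6C}{\epsilon}\lVert \nabla_h(\chi - u^*_h)^+\rVert^2 - 12\sum_{T \in \mathbb F_h}\int_T \sigma_T (\chi - u^*_h)^{-}\, dx.
\end{align*}
I would then insert this upper bound into the inequality from Lemma \ref{l1}.

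The key maneuver is to choose $\epsilon > 0$ sufficiently small so that the contribution $6\epsilon \lVert \sigma(u) - F_{\underline{\sigma}_h}\rVert^2_{-1}$ on the right can be absorbed into the term $\lVert \sigma(u) - F_{\underline{\sigma}_h}\rVert^2_{-1}$ on the left-hand side of Lemma \ref{l1}. After absorption, the surviving right-hand side is controlled by a constant times $\eta_1^2 + \eta_2^2 + \eta_3^2 + \lVert \nabla_h(\chi - u^*_h)^+\rVert^2 - \sum_{T\in\mathbb F_h}\int_T \sigma_T(\chi - u^*_h)^{-}\, dx$, which is exactly $\eta_h^2$.

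As a final consistency check, I would note that the contact-set summand in $\eta_h^2$ is genuinely non-negative: Lemma \ref{l2.2n}(1) gives $\sigma_T \leq 0$, while $(\chi - u^*_h)^{-} \geq 0$ by definition, so the minus sign in front renders this contribution non-negative, as one expects from an estimator term. The main obstacle here is therefore not analytic but bookkeeping: tracking the constants through the absorption step and ensuring the signs of the contact contribution line up correctly; no new estimates beyond those already supplied by Lemmas \ref{l1}--\ref{l3} are needed.
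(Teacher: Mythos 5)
Your proposal is correct and follows essentially the same route as the paper, which deduces Theorem \ref{t1} precisely by combining Lemma \ref{l1}, Lemma \ref{l2} and Lemma \ref{l3}; your absorption of the $\epsilon\lVert\sigma(u)-F_{\underline{\sigma}_h}\rVert^2_{-1}$ contribution into the left-hand side is exactly the intended (and otherwise omitted) bookkeeping step. Your sign check on the contact-set term via Lemma \ref{l2.2n}(1) is also consistent with the paper's convention that $(\chi-u^*_h)^-\geq 0$.
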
  
\subsection{Efficiency of the Error Estimator}~
\par
Let a mesh element $T\in \cT_h$ be fixed and define the following sets of faces and elements sharing at least one node with $T$ :
$$\cF_{\cN,T}\coloneqq\{F\in \cF_h~|~\overline{F}\cap\partial T\neq \emptyset\},~~~\cT_{\cN,T}\coloneqq \{T'\in \cT_h~|~{\overline{T}}'\cap\overline{T}\neq \emptyset\}.$$
For $T\in\cT_h$, define $data~ oscillation$ as $$\text{osc}_h(f;T)\coloneqq\lVert h_T(\bar{f}-f)\rVert_{T},$$
where $\bar{f}\in L^2(\O)$ with $\bar{f}|_{T}=\frac{1}{\lvert T\rvert}\int\limits_T f~dx$.\\
\begin{theorem}\label{t2}
The following estimates hold:\\
For all $T\in \cT_h,$
\begin{equation}
\lVert\nabla(p^{k+1}_T\underline{u}_T-u^*_h)\rVert_T\lesssim \big(\lVert \nabla_h(p^{k+1}_h\underline{u}_h-u)\rVert_{\cT_{\cN,T}}+\lVert\underline{u}_h\rVert_{a,\cT_{\cN,T}}\big),\label{26}
\end{equation}
\begin{multline}
h_T\lVert(f+\Delta p^{k+1}_T\underline{u}_T-\sigma_T)-\pi^0_T(f+\Delta p^{k+1}_T\underline{u}_T-\sigma_T)\rVert_T\lesssim \big(\lVert\nabla(p^{k+1}_T\underline{u}_T-u)\rVert_T\\+ \lVert\sigma(u)-F_{\underline{\sigma}_h}\rVert_{H^{-1}(T)}+osc_h(f,T)\big),\label{27}
\end{multline}
\begin{align}
\lVert\nabla(\chi-u^*_h)^+\rVert_T \lesssim \big(\lVert \nabla_h(p^{k+1}_h\underline{u}_h-u)\rVert_{\cN,T}+\lVert\underline{u}_h\rVert_{a,\cT_{\cN,T}}+\lVert \nabla(\chi-\pi^0_T\chi)\rVert_T\big),\label{29}
\end{align}
\begin{multline}
\int_T -\sigma_T(\chi-u^*_h)^-dx\lesssim\bigg(\lVert\nabla(p^{k+1}_T\underline{u}_T-u)\rVert_T+ \lVert\sigma(u)-F_{\underline{\sigma}_h}\rVert_{H^{-1}(T)}\\+Osc(f,T)+h_T^{-1}\lVert (\chi-u^*_h)^-\rVert_T\bigg)^2+\int_T\big(-f-\Delta p^{k+1}_T\underline{u}_T\big)(\chi-u^*_h)^-dx.\label{30}
\end{multline}
\end{theorem}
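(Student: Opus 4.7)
The proof establishes the four local efficiency estimates one-by-one, combining three standard ingredients: the averaging-approximation bound of Lemma \ref{l2.1}, a Verf\"urth-style element-bubble argument for the volume residuals, and the discrete obstacle constraint $(u_T,1)_T\geq(\chi,1)_T$ together with the sign properties of $\underline{\sigma}_h$ provided by Lemma \ref{l2.2n}.

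For \eqref{26}, since $p^{k+1}_T\underline{u}_T-u^*_h=(I-E^{k+1}_h)(p^{k+1}_h\underline{u}_h)$ on $T$, Lemma \ref{l2.1} bounds the left-hand side by $h_T^{-2}\sum_{F\in\widetilde\cF_T}h_F\lVert\sjump{p^{k+1}_h\underline{u}_h}\rVert_F^2$. Since $u\in H^1_0(\Omega)$ has vanishing trace jumps on interior faces and zero trace on $\partial\Omega$, I would rewrite $\sjump{p^{k+1}_h\underline{u}_h}=\sjump{p^{k+1}_h\underline{u}_h-u}$ and split each face term into one-sided contributions $\lVert p^{k+1}_{T'}\underline{u}_{T'}-u\rVert_F$ over $T'\in\cT_{\cN,T}$. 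The trace inequality \eqref{ieq2} converts these into $T'$-based $H^1$-norms; the resulting $L^2$-remainder is absorbed by a Poincar\'e-type argument anchored on the mean identity \eqref{13} (so that $p^{k+1}_{T'}\underline{u}_{T'}-u$ has cell mean $u_{T'}-\pi^0_{T'}u$), and the leftover discrepancy $|u_{T'}-\pi^0_{T'}u|$ is finally controlled through Lemma \ref{st} together with the equivalence between $\lVert\cdot\rVert_{a,h}$ and the jump seminorm $\lVert\cdot\rVert_h$ on $V_h$, yielding the right-hand side of \eqref{26}.

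For \eqref{27}, I would execute the classical element-bubble argument. Setting $r_T:=(I-\pi^0_T)(f+\Delta p^{k+1}_T\underline{u}_T-\sigma_T)$ and $\phi_T:=r_Tb_T\in H^1_0(T)\subset H^1_0(\Omega)$ with $b_T$ the standard interior bubble, the usual estimates give $\lVert r_T\rVert_T^2\lesssim(r_T,\phi_T)_T$ and $\lVert\nabla\phi_T\rVert_T\lesssim h_T^{-1}\lVert r_T\rVert_T$. Writing $f=\bar f+(f-\bar f)$, integrating by parts the Laplacian term using $\phi_T|_{\partial T}=0$, and combining the continuous identity $(f,\phi_T)-(\nabla u,\nabla\phi_T)=\langle\sigma(u),\phi_T\rangle_{-1}$ with $\langle F_{\underline{\sigma}_h},\phi_T\rangle_{-1}=(\sigma_T,\phi_T)_T$ reduces $(r_T,\phi_T)_T$ to a sum of $(f-\bar f,\phi_T)_T$, $(\nabla(u-p^{k+1}_T\underline{u}_T),\nabla\phi_T)_T$, and $\langle\sigma(u)-F_{\underline{\sigma}_h},\phi_T\rangle_{-1}$. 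Cauchy--Schwarz on each summand, division by $\lVert r_T\rVert_T$, and multiplication by $h_T$ then deliver \eqref{27}.

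For \eqref{29}, the pointwise bound $|\nabla(\chi-u^*_h)^+|\leq|\nabla(\chi-u^*_h)|$ combined with the triangle inequality about $\pi^0_T\chi$ yields $\lVert\nabla(\chi-u^*_h)^+\rVert_T\leq\lVert\nabla(\chi-\pi^0_T\chi)\rVert_T+\lVert\nabla u^*_h\rVert_T$, and the second term is split as $\lVert\nabla(u^*_h-p^{k+1}_T\underline{u}_T)\rVert_T+\lVert\nabla p^{k+1}_T\underline{u}_T\rVert_T$, with the first controlled by \eqref{26} and the second by the trivial bound $\lVert\nabla p^{k+1}_T\underline{u}_T\rVert_T\leq\lVert\underline{u}_T\rVert_{a,T}\leq\lVert\underline{u}_h\rVert_{a,\cT_{\cN,T}}$. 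For \eqref{30}, writing $-\sigma_T=(f+\Delta p^{k+1}_T\underline{u}_T-\sigma_T)-(f+\Delta p^{k+1}_T\underline{u}_T)$ isolates the declared extra integral $\int_T(-f-\Delta p^{k+1}_T\underline{u}_T)(\chi-u^*_h)^-\,dx$; the remaining integral $\int_T(f+\Delta p^{k+1}_T\underline{u}_T-\sigma_T)(\chi-u^*_h)^-\,dx$ is split into its $\pi^0_T$-projection and complement, the oscillating complement being bounded by \eqref{27} and Cauchy--Schwarz against $\lVert(\chi-u^*_h)^-\rVert_T$, while the constant-projection piece is controlled through the local identity obtained by testing \eqref{dlm} with the cell-indicator DOF, which expresses $|T|\sigma_T$ in terms of stabilization-type quantities bounded by $\lVert\underline{u}_h\rVert_{a,\cT_{\cN,T}}$. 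The main obstacle, I expect, is the Poincar\'e step in \eqref{26}: converting the $L^2$-remainder from the trace bound into purely gradient- and stabilization-based contributions without leaving an uncontrolled $|u_{T'}-\pi^0_{T'}u|$ remnant, since the same chain of estimates propagates into both \eqref{29} (through its reliance on \eqref{26}) and \eqref{30} (through its reliance on \eqref{27}).
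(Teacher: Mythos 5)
Your bubble argument for \eqref{27} and the triangle-inequality splittings for \eqref{29} are essentially the paper's, but your route to \eqref{26} fails at exactly the point you flag, and the fix you propose does not work. After replacing $\sjump{p^{k+1}_h\underline{u}_h}$ by $\sjump{p^{k+1}_h\underline{u}_h-u}$ and applying the trace inequality \eqref{ieq2} one-sidedly, you are left with the scaled $L^2$ remainder $h_{T'}^{-1}\lVert p^{k+1}_{T'}\underline{u}_{T'}-u\rVert_{T'}$; the Poincar\'e step anchored on \eqref{13} only reduces this to the mean deviation $h_{T'}^{-1}\lVert \pi^0_{T'}(p^{k+1}_{T'}\underline{u}_{T'}-u)\rVert_{T'}=h_{T'}^{-1}\lvert T'\rvert^{1/2}\lvert u_{T'}-\pi^0_{T'}u\rvert$, which is precisely (a projection of) the quantity you are trying to bound, so the argument is circular. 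Lemma \ref{st} and the equivalence of $\lVert\cdot\rVert_{a,h}$ with $\lVert\cdot\rVert_h$ cannot close this, because both seminorms see only the discrete object $\underline{u}_h$ (differences $u_F-u_T$ and reconstructions) and carry no information about the exact solution $u$; there is no local estimate of $\lvert u_{T'}-\pi^0_{T'}u\rvert$ by gradient-error and stabilization terms available here. The paper's device avoids the scaled $L^2$ term altogether: split the jump with the face projection $\pi^k_F$. The complement $(I-\pi^k_F)\sjump{p^{k+1}_h\underline{u}_h-u}$ annihilates face averages, so a trace-plus-Poincar\'e argument on the face bounds it by the neighbouring gradient errors $\lVert\nabla_h(p^{k+1}_h\underline{u}_h-u)\rVert_{\cT_{\cN,T}}$; for the projected part one uses that the face unknown $u_F$ is single-valued across $F$, so $\pi^k_F\sjump{p^{k+1}_h\underline{u}_h}=\sjump{\pi^k_F(p^{k+1}_T\underline{u}_T-u_F)}$, and $h_F^{-1/2}\lVert\pi^k_F(p^{k+1}_T\underline{u}_T-u_F)\rVert_F$ is exactly a stabilization contribution, hence bounded by $\lVert\underline{u}_h\rVert_{a,\cT_{\cN,T}}$. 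Without this idea \eqref{26} (and, through it, \eqref{29}) cannot be closed along your lines.

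Two secondary points. In \eqref{27}, taking $\phi_T=r_Tb_T$ with $r_T$ containing the non-polynomial $f$ is inconsistent with the bubble norm-equivalence and inverse estimate; you must first replace $f$ by $\bar f$ (as the paper does via the oscillation term), a routine repair. For \eqref{30}, the paper never splits the residual into its $\pi^0_T$ part and complement: inside the proof of \eqref{27} it bounds the full residual $h_T\lVert f+\Delta p^{k+1}_T\underline{u}_T-\sigma_T\rVert_T$, and then \eqref{30} follows from Cauchy--Schwarz and Young directly. Your plan to control the constant piece $\pi^0_T(f+\Delta p^{k+1}_T\underline{u}_T-\sigma_T)$ by testing \eqref{dlm} with the cell indicator reproduces the identity used in Lemma \ref{l2}, which involves stabilization \emph{and} face-flux terms; at best it yields a bound containing $\lVert\underline{u}_h\rVert_{a,\cT_{\cN,T}}$, a quantity that does not appear on the right-hand side of \eqref{30}, so as written you would prove a weaker variant of \eqref{30} rather than the stated estimate.
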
     
\begin{proof}
Let $T\in\cT_h$ be fixed. Then from Lemma \ref{l2.1}, standard interpolation estimates and equation \eqref{ieq2} we obtain\\
\begin{align*}
\lVert\nabla(p^{k+1}_T\underline{u}_T-{u}^*_h)\rVert_T&\lesssim \sum\limits_{F\in\widetilde{\cF}_T}h^{-1/2}_F\lVert \sjump{p^{k+1}_T\underline{u}_T}\rVert_F\\
&\lesssim\sum\limits_{F\in\widetilde{\cF}_T}\left[h^{-1/2}_F\left\Vert \sjump{p^{k+1}_T\underline{u}_T-u}-\pi^k_F\sjump{p^{k+1}_T\underline{u}_T-u}\right\Vert_F+h^{-1/2}_F\left\Vert \pi^k_F\sjump{p^{k+1}_T\underline{u}_T}\right\Vert_F\right]\\
&\lesssim\sum\limits_{F\in\widetilde{\cF}_T}h^{-1/2}_F\left\Vert \sjump{({p^{k+1}_T\underline{u}_T-u})-\pi^k_F(p^{k+1}_T\underline{u}_T-u)}\right\Vert_F+\\
&\quad\quad\quad\quad\sum\limits_{F\in\tilde{\cF}_T}h^{-1/2}_F\left\Vert \pi^k_F\sjump{p^{k+1}_T\underline{u}_T-u_F}\right\Vert_F\\
&\lesssim\sum\limits_{T\in\cT_{\cN,T}} \left\Vert \nabla(p^{k+1}_h\underline{u}_h-u)\right\Vert_T+\left\Vert \underline{u}_h\right\Vert_{a,\cT_{\mathcal N,T}}.
\end{align*}
\par In order to prove \eqref{27}, the triangle inequality yields, \\
\begin{align*}
h_T\lVert f+\Delta p^{k+1}_T\underline{u}_T-\sigma_T\rVert_T\leq h_T\lVert\bar{f}+\Delta p^{k+1}_T\underline{u}_T-\sigma_T\rVert_T+\text{osc}_h(f;T).
\end{align*}
\par By \cite[Lemma 3.3]{16}, there exists a $\varphi_h\in P^1(T)$ such that $\lVert \varphi_h\rVert_T=1$ and \\
\begin{align*}
\lVert \bar{f}+\Delta p^{k+1}_T\underline{u}_T-\sigma_T\rVert_T\lesssim\int_T (\bar{f}+\Delta p^{k+1}_T\underline{u}_T-\sigma_T)\psi_T\varphi_h~dx,
\end{align*}
\par where $\psi_T\in H^1_0(T)$, is the element bubble function equal to the product of barycentric coordinates of $T$ and rescaled so as to take the value $1$ at the center of gravity of $T$.
\par
Then using \eqref{3}, \eqref{21} and integration by parts we get,\\
\begin{align*}
\int_T (\bar{f}+\Delta p^{k+1}_T\underline{u}_T-\sigma_T&) \psi_T\varphi_h~dx\\ &=\begin{multlined}\int_T (f+\Delta p^{k+1}_T\underline{u}_T-\sigma_T)\psi_T\varphi_h~dx+\int_T (\bar{f}-f)\psi_T\varphi_h~dx\end{multlined}\\
&=(\nabla u,\nabla\psi_T\varphi _h)_T-(\nabla p^{k+1}_T\underline{u}_T,\nabla\psi_T\varphi_h)_T+\int_T(\bar{f}-f)\psi_T\varphi_h~dx+\\& \quad\quad\quad\quad\langle\sigma(u)-F_{\underline{\sigma}_h},\psi_T\varphi_h\rangle_{-1}\\
&\lesssim \big(\lVert \nabla(u-p^{k+1}_T\underline{u}_T)\rVert_T+\lVert \sigma(u)-F_{\underline{\sigma}_h}\rVert_{H^{-1}(T)}\big)\lVert \nabla(\psi_T\varphi_h)\rVert_T+\\&\quad\quad\quad\quad\lVert \bar{f}-f\rVert_T\lVert\psi_T\varphi_h\rVert_T.
\end{align*}
\par Since,\\
\begin{align*}
\lVert\nabla(\psi_T\varphi_h)\rVert_T\lesssim h^{-1}_T\lVert\psi_T\varphi_h\rVert_T\lesssim h^{-1}_T.
\end{align*}
\par Therefore,\\
\begin{align*}
h_T\lVert f+\Delta p^{k+1}_T\underline{u}_T-\sigma_T\rVert_T\lesssim \lVert\nabla(p^{k+1}_T\underline{u}_T-u)\rVert_T+\lVert\sigma(u)-F_{\underline{\sigma}_h}\rVert_{H^{-1}(T)}+\text{osc}_h(f,T).
\end{align*}
\par Hence,\\
\begin{multline*}
h_T\lVert(f+\Delta p^{k+1}_T\underline{u}_T-\sigma_T)-\pi^0_T(f+\Delta p^{k+1}_T\underline{u}_T-\sigma_T)\rVert_T\lesssim \bigg(\lVert\nabla(p^{k+1}_T\underline{u}_T-u)\rVert_T+\\ \lVert\sigma(u)-F_{\underline{\sigma}_h}\rVert_{H^{-1}(T)}+\text{osc}_h(f,T)\bigg),
\end{multline*}
\par where we have used the fact that $\lVert g-\pi^0_Tg\rVert_T\leq \lVert g\rVert_T~\forall ~g\in L^2(T)$.\\ \\
In order to prove \eqref{29}, using the fact that $\chi,~u^*_h$ are continuous inside $\O$ and $0\leq \lvert \nabla(\chi-u^*_h)\rvert$, $\pi^0_T(\chi)\in \mathbb P^0(T)~\forall T\in\cT_h$, we get:\\
\begin{eqnarray*}
\lVert\nabla(\chi-u^*_h)^+\rVert_T &=&\left(\int_{T\cap\{\chi>u^*_h\}}\lvert \nabla(\chi-u^*_h)\rvert^2~dx+\int_{T\setminus\{\chi>u^*_h\}}0\right)^{\frac{1}{2}}\\
&\leq&\lVert \nabla(\chi-u^*_h)\rVert_T\\
&\leq&\lVert \nabla(\chi-\pi^0_T\chi)\rVert_T+\lVert \nabla(p^{k+1}_T\underline{u}_T-u^*_h)\rVert_T+\lVert \nabla(\pi^0_T\chi-p^{k+1}_T\underline{u}_T)\rVert_T\\
&=&\lVert \nabla(\chi-\pi^0_T\chi)\rVert_T+\lVert \nabla(p^{k+1}_T\underline{u}_T-u^*_h)\rVert_T+\lVert \nabla p^{k+1}_T\underline{u}_T\rVert_T\\
&\lesssim&\lVert \nabla(\chi-\pi^0_T\chi)\rVert_T+\lVert \nabla(p^{k+1}_T\underline{u}_T-u^*_h)\rVert_T+\lVert  \underline{u}_{T}\rVert_{a,T}.
\end{eqnarray*}
\par In order to prove \eqref{30},
\begin{align*}
-\int_T\sigma_T(\chi-u^*_h)^-dx&=\int_T\big(f+\Delta p^{k+1}_T\underline{u}_T-\sigma_T\big)(\chi-u^*_h)^-dx+\\&\quad\quad\quad\quad \int_T\big(-f-\Delta p^{k+1}_T\underline{u}_T\big)(\chi-u^*_h)^-dx\\
&\lesssim h_T^2\lVert f+\Delta p^{k+1}_T\underline{u}_T-\sigma_T\rVert_T^2+h_T^{-2}\lVert (\chi-u^*_h)^-\rVert_T^2+\\&\quad\quad\quad\quad  \int_T\big(-f-\Delta p^{k+1}_T\underline{u}_T\big)(\chi-u^*_h)^-dx.
\end{align*}
\par Hence \eqref{30} is now proved using the above inequality and estimates of \eqref{27}.
\end{proof}
\section{Numerical Experiments}\label{sec4}
In this section, we present a series of numerical results to demonstrate the effectiveness of the $a~ posteriori$ error estimator developed in Section \ref{sec3}. Although the analysis has been conducted based on the model problem \eqref{2} with homogeneous boundary conditions, it is worth noting that the that the analysis can easily be extended to the underlying problem with non-homogeneous boundary conditions. We employ a standard adaptive algorithm, consisting of the following sequential steps:
\begin{enumerate}
\item {\bf SOLVE:} The solution to the discrete nonlinear problem (equation \eqref{14}) is obtained using the primal-dual active set algorithm \cite{29}.
\item {\bf ESTIMATE:} In this step, we compute the $a~ posteriori$ error estimator $\eta_h$ for each element $T \in \cT_h$.
\item {\bf MARK:} We utilize the error estimations obtained in the ESTIMATE step and apply the D\"orfler marking strategy \cite{30} with a specified parameter $\Theta=0.3$ for marking elements.
\item {\bf REFINE:} Finally, we perform mesh refinement using the newest vertex bisection algorithm, resulting in the generation of a new adaptive mesh.
\end{enumerate}
This process allows us to validate the performance of our error estimator $\eta_h$, defined in Theorem \ref{t1}, for various obstacle problems. In the numerical experiments, we have taken the discrete solution to be constant polynomial in cell unknowns and linear polynomial on the faces. This makes our potential reconstruction operator to be an elementwise polynomial of degree 2.

\begin{example}\label{exm1}
We consider the square domain $\O=(-1,1)^2$ and the obstacle function $\chi=0$. We prescribe a contact radius $r_0=0.7$ and, setting $r^2= x^2 + y^2$, we take the load function
$$f(x,y)\coloneqq \begin{cases} -4(4r^2-2r_0^2),& \text{if}~ r>r_0,\\
						-8r_0^2(1-r^2+r_0^2),& \text{if}~ r\leq r_0.\\
						\end{cases}$$
Figure \ref{f4.11} illustrate the behavior of the energy error $\lVert \nabla(u-p^{k+1}_h\underline{u}_h)\rVert$ and $a~ posteriori$ error estmator $\eta_h$ with respect to Degrees of Freedoms (DOFs). This figure ensures the optimal convergence (rate DOFs$^{-1}$ ) of the error and the estimator together with the reliability of the estimator. Figure \ref{f4.12} represent efficiency indices with respect to degrees of freedom. Fig \ref{f4.13} displays the mesh refinement at level 31. 
\end{example}

\begin{figure}
	\begin{subfigure}[b]{0.4\textwidth}
		\includegraphics[width=\linewidth]{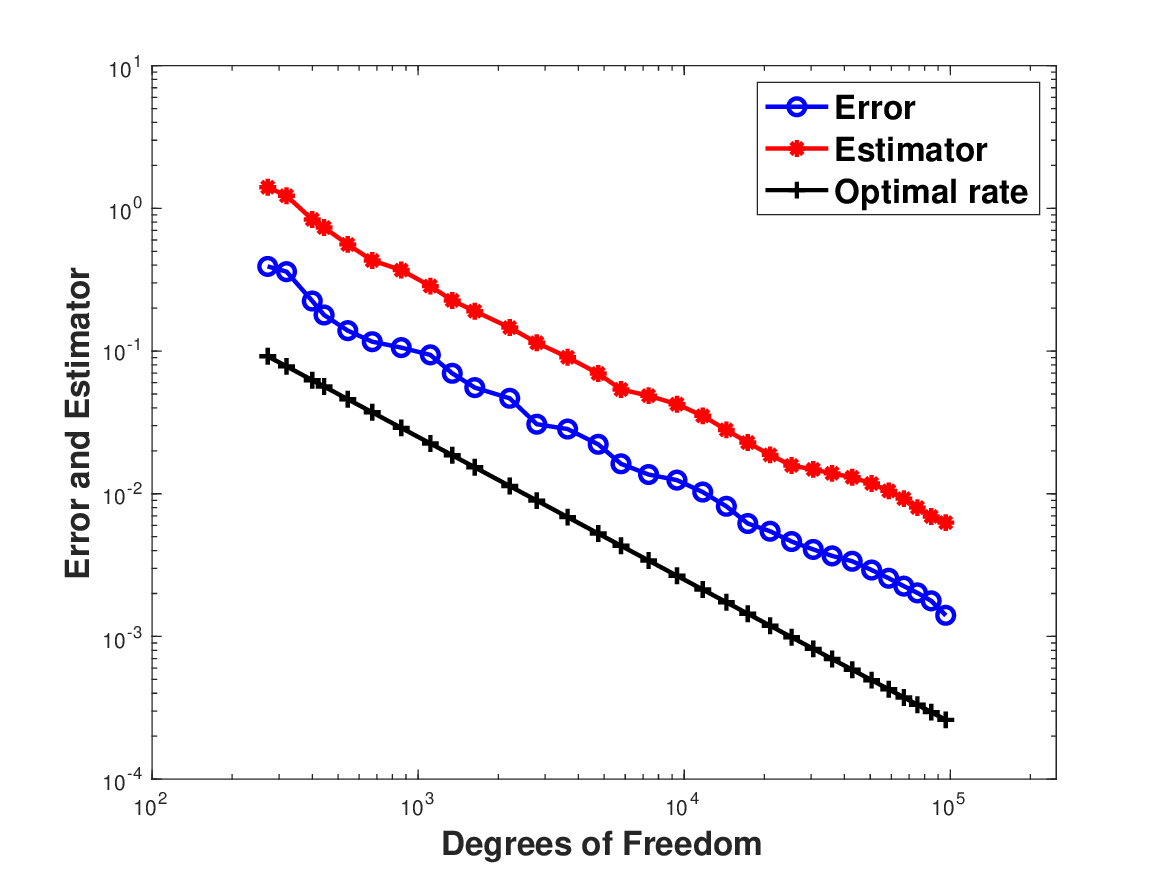}
		\caption{}
		\label{f4.11}
	\end{subfigure}
	\begin{subfigure}[b]{0.4\textwidth}
		\includegraphics[width=\linewidth]{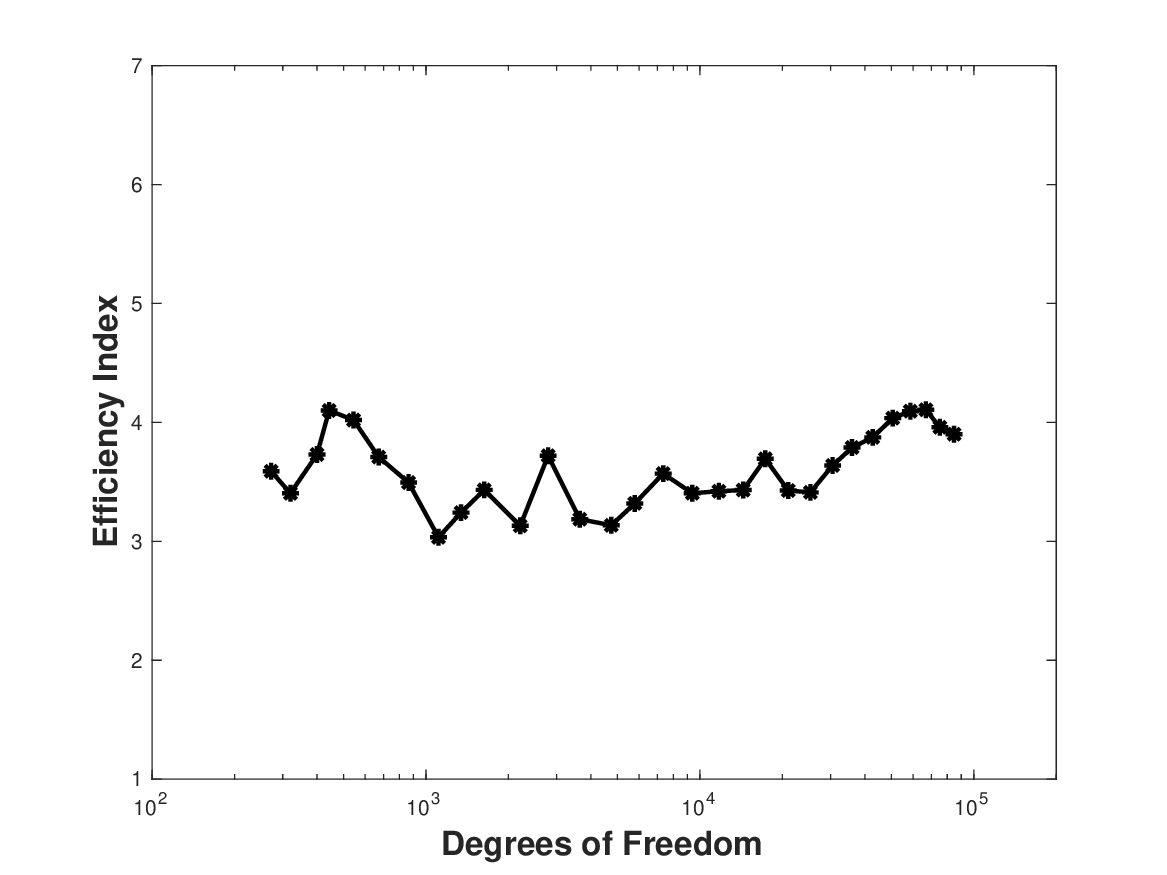}
		\caption{}
		\label{f4.12}
	\end{subfigure}
	\caption{Error, Estimator and Efficiency Index for Example 1.}\label{f4.1}
\end{figure} 

\begin{figure}
	\begin{subfigure}[b]{0.4\textwidth}
		\includegraphics[width=\linewidth]{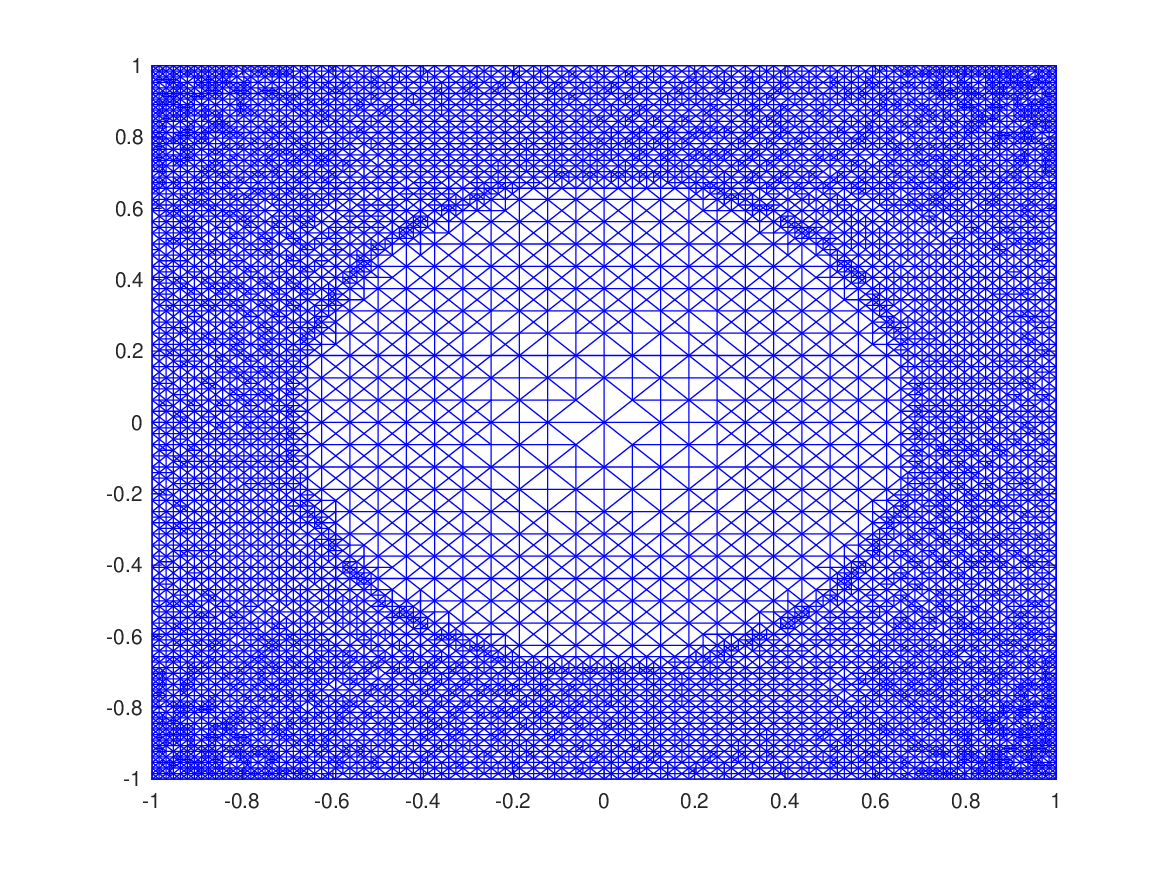}
		\caption{}
		\label{f4.13}
	\end{subfigure}
	\begin{subfigure}[b]{0.4\textwidth}
		\includegraphics[width=\linewidth]{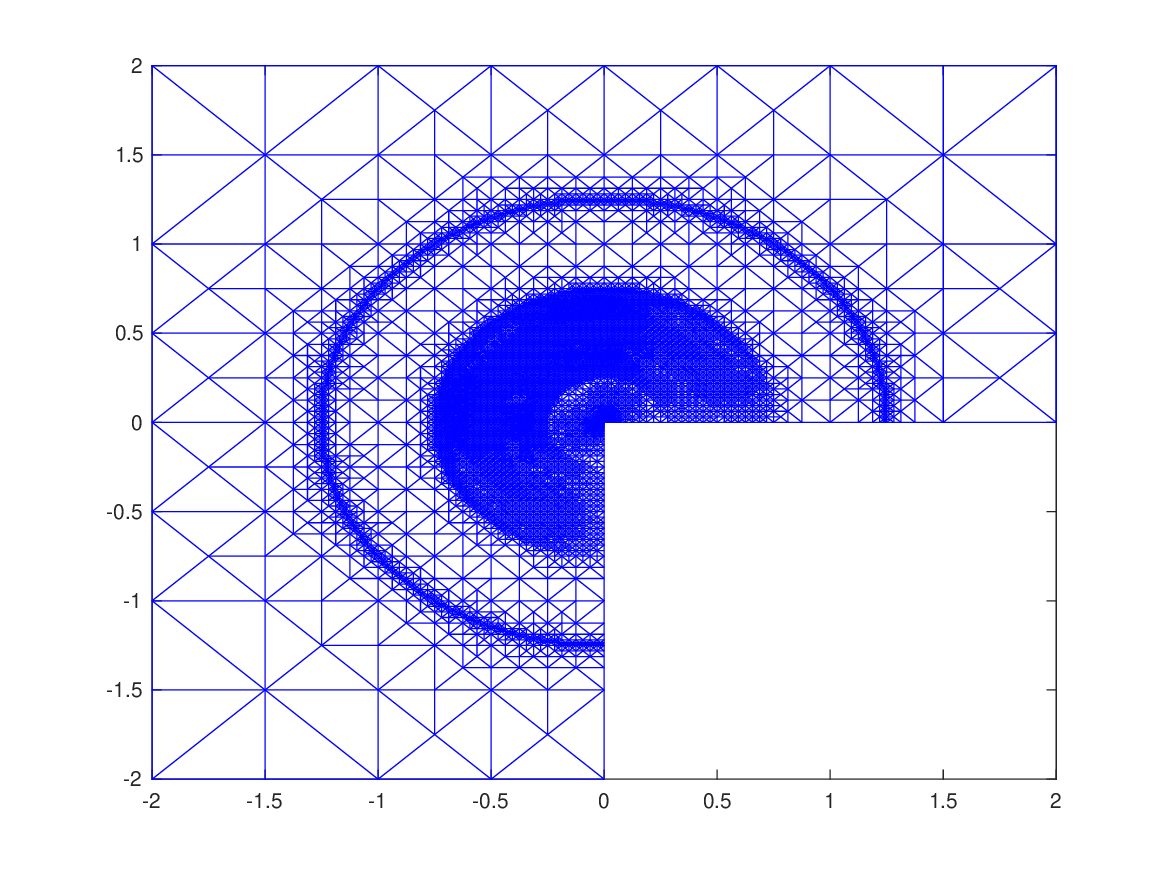}
		\caption{}
		\label{f4.23}
	\end{subfigure}
	\caption{Adaptive Mesh for Example 1 and Example 2 respectively.}\label{f4.2}
\end{figure}
\begin{figure}
	\begin{subfigure}[b]{0.4\textwidth}
		\includegraphics[width=\linewidth]{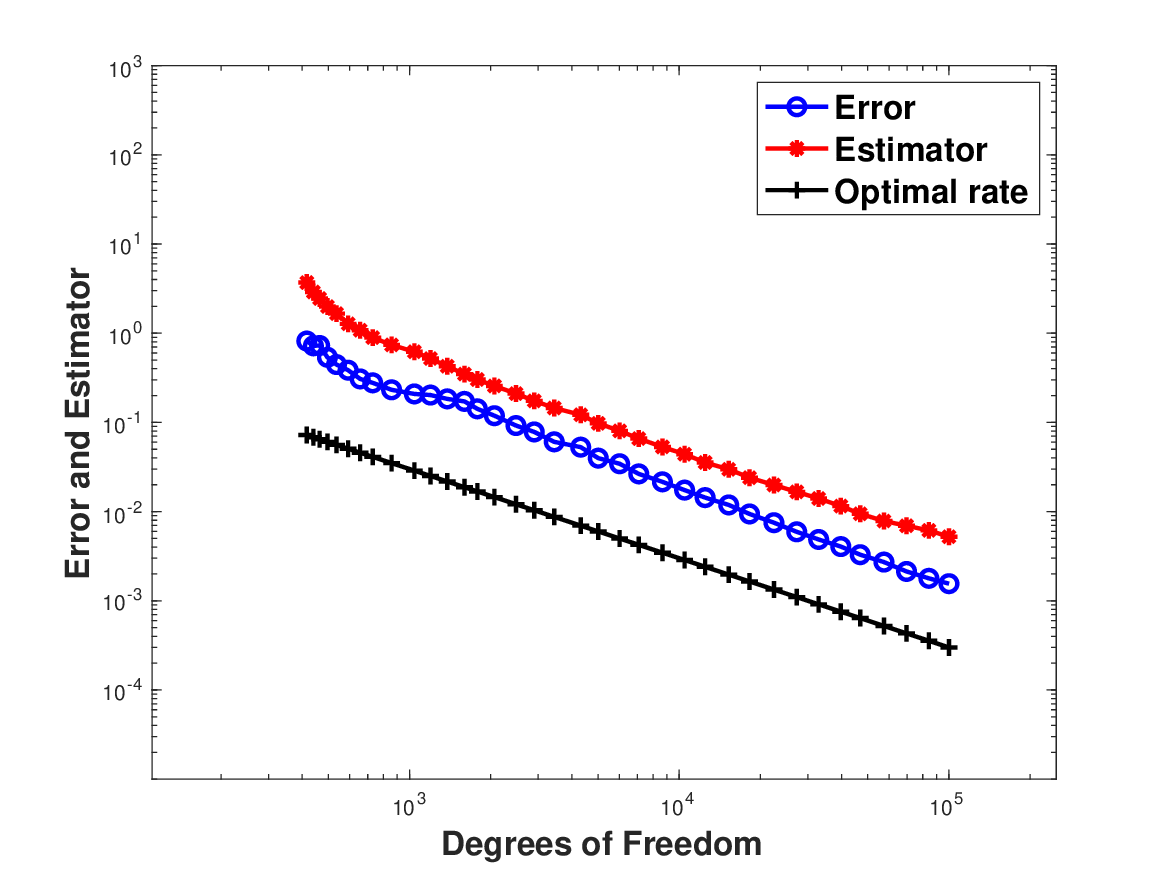}
		\caption{}
		\label{f4.21}
	\end{subfigure}
	\begin{subfigure}[b]{0.4\textwidth}
		\includegraphics[width=\linewidth]{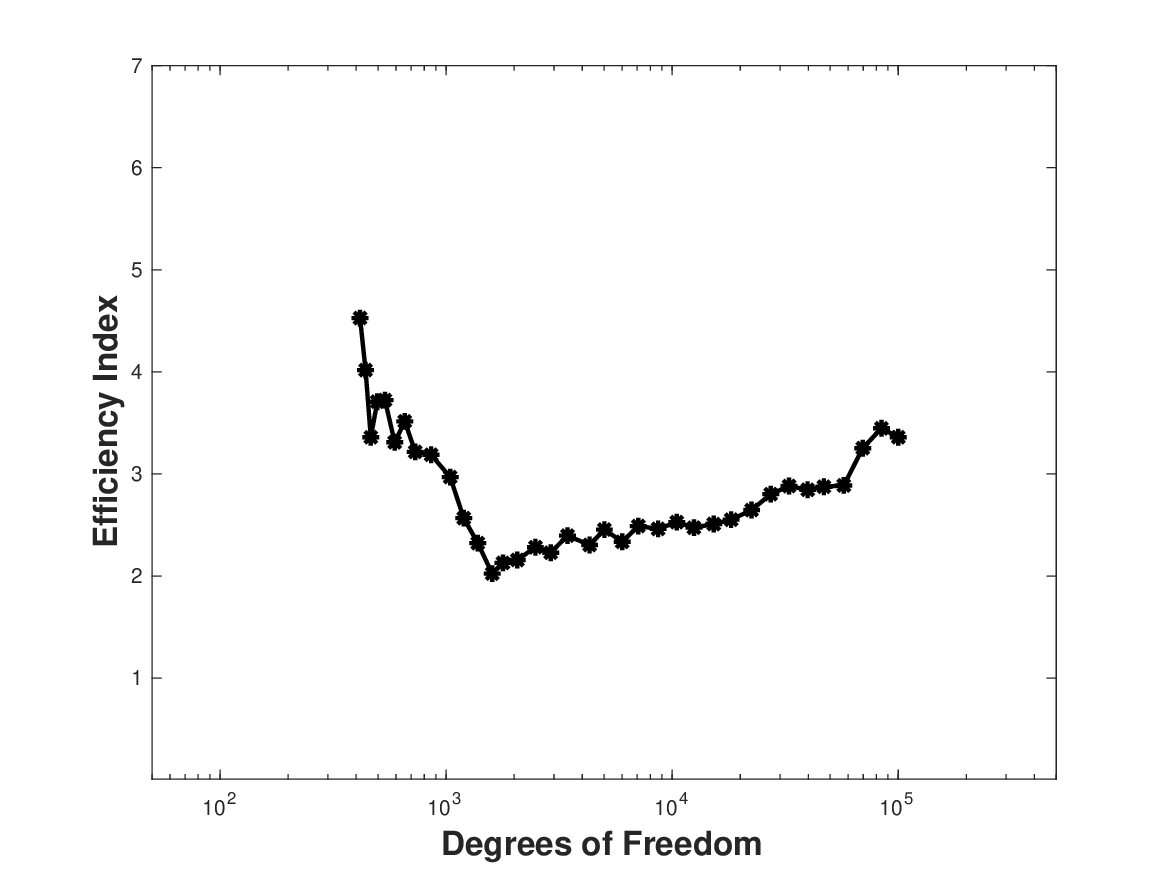}
		\caption{}
		\label{f4.22}
	\end{subfigure}
	\caption{Error, Estimator and Efficiency Index for Example 2.}\label{f4.3}
\end{figure}

\begin{example}\label{exm2}
In this example, we consider the domain to be non-convex. Let $\Omega = (-2,2)^2 \setminus [ 0, 2) \times (-2, 0]$ and $ \chi=0$. Therein, the given data is as follows
	\begin{align*}
	u &= r^{2/3} sin(2\theta/3 ) \gamma_1(r),\\
	f&=-r^{2/3} sin(2 \theta/3) \Big(\frac{\gamma_1^{'}(r)}{r} + \gamma_1^{''}(r) \Big)-\frac{4}{3}r^{-1/3}sin(2 \theta /3) \gamma_1^{'}(r)-\gamma_2(r),\\
	\text{where}\\
	\gamma_1(r) &= \begin{cases}  1, \quad  \tilde{r}<0 \\
	-6 \tilde{r}^5 + 15 \tilde{r}^4 -10 \tilde{r}^3 +1 , \quad  0 \leq \tilde{r} <1 \\
	0  , \quad \tilde{r} \geq 1,\\
	\end{cases}\\
	\gamma_2(r) &= \begin{cases} 0, \quad r \leq \frac{5}{4} \\
	1, \quad \mbox{otherwise}
	\end{cases}\\
	\text{with}\quad
	\tilde{r}= 2(r-1/4).
	\end{align*}
	\noindent
	The behavior of the true error and error estimator is reported in Figure \ref{f4.21}. We observe that both error and estimator converge with the optimal rate DOFs$^{-1}$. The efficiency of the error estimator is shown in Figure \ref{f4.22}. Figure \ref{f4.23} displays the mesh refinement at level 36 indicating that the singular behavior of the solution have been captured very well by the error estimator.
\end{example}

\newpage


\begin{thebibliography}{100}
\addcontentsline{toc}{section}{References}

\bibitem{8}
M. Ainsworth and J. T. Oden; $A~ Posteriori$ error estimation in finite element analysis, Wiley-Blackwell, 2000.

\bibitem{17}
K. Atkinson and W. Han; Theoretical Numerical Analysis. A functional analysis framework, Third edition, Springer, 2009.

\bibitem{34}
S. Bartels and C. Carstensen; Averaging techniques yield reliable $a~ posteriori$ finite element error control for obstacle problems, {Numerische Mathematik}, 2004, 99, 225-249.

\bibitem{12}
F. B. Belgacem; Numerical solution of some variational inequalities arisen from unilateral contact problems by finite element method, {SIAM Journal on Numerical Analysis}, 2000, 37, 1198-1216.

\bibitem{35}
D. Braess; $A~ posteriori$ error estimators for obstacle problems-another look, {Numerische Mathematik}, 2005, 101, 415-421.

\bibitem{31}
S. C. Brenner; Convergence of nonconforming multigrid methods without full elliptic regularity, Mathematics of computation, 1999, 68, 25-53.

\bibitem{32}
S. C. Brenner, L. Y. Sung and Y. Zhang; Finite element methods for the displacement obstacle problem of clamped plates, Mathematics of Computation, 2012, 81, 1247-1262.

\bibitem{bren}
S. C. Brenner and L. R. Scott; The Mathematical Theory of Finite Element Methods, third edition, Texts in Applied Mathematics. Springer, New York, 2008, 15.

\bibitem{33}
F. Brezzi, W. W. Hager and P. A. Raviart; Error estimates for the finite element solution of variational inequalities, Part I. primal theory, {Numerische Mathematik}, 1977, 28, 431-443.

\bibitem{24}
F. Brezzi, K. Lipnikov and M. Shashkov; Convergence of the mimetic finite difference method for diffusion problems on polyhedral meshes, SIAM Journal on Numerical Analysis, 2005, 43(5), 1872-1896.

\bibitem{25}
F. Brezzi, K. Lipnikov, M. Shashkov and V. Simoncini; A new discretization methodology for diffusion problems on generalized polyhedral meshes, Computer Methods in Applied Mechanics Engineering, 2007, 196, 3682-3692.

\bibitem{20}
E. Burman and A. Ern; An unfitted Hybrid High-Order method for elliptic interface problems, SIAM Journal on Numerical Analysis, 2018, 56(3), 1525-1546.

\bibitem{4}
F. Chouly, A. Ern and N. Pignet; A hybrid high-order discretization combined with Nitsche's method for contact and tresca friction in small strain elasticity, SIAM Journal on Scientific Computing, 2020, 42, 2300-2324.

\bibitem{5}
P. G. Ciarlet; The Finite Element Method for Elliptic Problems, SIAM, 2002.

\bibitem{3}
M. Cicuttin, A. Ern and T. Gudi; Hybrid high-order methods for the elliptic obstacle problem, Journal of Scientific Computing, 2020, 83(8).

\bibitem{27}
B. Cockburn, J. Gopalakrishnan and R. Lazarov; Unified hybridization of discontinuous Galerkin, mixed, and continuous Galerkin methods for second order elliptic problems, SIAM Journal on Numerical Analysis, 2009, 47(2), 1319-1365.

\bibitem{26}
B. Cockburn, D. A. Di Pietro and A. Ern; Bridging the Hybrid High-Order and Hybridizable discontinuous Galerkin methods, ESAIM: Mathematical Modelling and Numerical Analysis, 2016, 50(3), 635-650.

\bibitem{28}
B. A. D. Dios, K. Lipnikov and G. Manzini; The nonconforming virtual element method, ESAIM: Mathematical Modelling and Numerical Analysis, 2016, 50(3), 879-904.

\bibitem{30}
W. D\"orlfer; A convergent adaptive algorithm for Poisson's equation, SIAM Journal on Numerical Analysis, 1996, 33, 1106-1124.

\bibitem{23}
R. Eymard, T. Gallouet, and R. Herbin; Discretization of heterogeneous and anisotropic diffusion problems on general nonconforming meshes SUSHI: a scheme using stabilization and hybrid interfaces, IMA Journal of Numerical Analysis, 2010, 30(4), 1009-1043.

\bibitem{14}
R. S. Falk;  Error Estimation for the approximation of a class of Variational Inequalities, Mathematics of Computation, 1974, 28, 963-971.

\bibitem{36}
S. Gaddam and T. Gudi; Inhomogeneous Dirichlet boundary condition in the $a~ posteriori$ error control of the obstacle problem, Computers and Mathematics with Applications, 2018, 75(7), 2311-2327.

\bibitem{rs3}
D. Garg, K. Porwal and R. Singla; Adaptive nonconforming finite element method for the Signorini problem in the supremum norm, Communicated.

\bibitem{9}
R. Glowinski; Lectures on Numerical Methods For Non-Linear Variational Problems, Springer, 2008.

 \bibitem{38}
 T. Gudi and K. Porwal; $A~ posteriori$ error control of discontinuous Galerkin methods for elliptic obstacle problems, Mathematics of Computation, 2014, 83(286), 579-602.
 
 \bibitem{39}
T. Gudi and K. Porwal; A remark on the $a~ posteriori$ error analysis of discontinuous Galerkin methods for obstacle problem, Computational Methods in Applied Mathematics, 2014, 14, 71-87. 

\bibitem{37}
T. Gudi and K. Porwal; A reliable residual based $a~ posteriori$ error estimator for quadratic finite element method for the elliptic obstacle problem, Computational Methods in Applied Mathematics, 2015, 15, 145-160.

\bibitem{29}
M. Hintermuller, K. Ito and K. Kunisch; The primal-dual active set strategy as a semi-smooth newton method, SIAM Journal on Optimization, 2002, 13(3), 865-888.

\bibitem{38}
R. Khandelwal, K. Porwal and R. Singla; Supremum-norm $a~ posteriori$ error control of quadratic discontinuous Galerkin methods for the obstacle problem, Computers and Mathematics with Applications, 2023, 137, 147-171.

\bibitem{15}
K. Y. Kim; $A~ posteriori$ error estimators for locally conservative methods of nonlinear elliptic problems, Applied Numerical Mathematics, 2007, 57, 1065-1080.

\bibitem{kind}
D. Kinderlehrer and G. Stampacchia; An introduction to variational inequalities and their applications, SIAM, Philadelphia, 2000.

\bibitem{21}
D. A. Di Pietro and J. Droniou; A Hybrid High-Order method for Leray Lions elliptic equations on general meshes. Mathematics of Computation, 2017, 86(307), 2159-2191.

\bibitem{18}
D. A. Di Pietro, J. Droniou and A. Ern; A discontinuous-skeletal method for advection-diffusion-reaction on general meshes, SIAM Journal on Numerical Analysis, 2015, 53(5), 2135-2157.

\bibitem{6}
D. A. Di Pietro and A. Ern, Mathematical Aspects of Discontinuous Galerkin Methods, Springer, 2012.

\bibitem{2}
D. A. Di Pietro and A. Ern; A hybrid high-order locking-free method for linear elasticity on general meshes, Computer Methods in Applied Mechanics and Engineering, 2015, 283, 1-21.

\bibitem{7}
D. A. Di Pietro, A. Ern and L. Formaggia; Numerical Methods for PDEs State of the Art Techniques, Springer, 2018.

\bibitem{1}
D. A. Di Pietro, A. Ern and S. Lemaire; An Arbitrary-Order and Compact-Stencil Discretization of Diffusion on General Meshes Based on Local Reconstruction Operators, Computational Methods in Applied Mathematics, 2014, 14(4), 461-472.

\bibitem{19}
D. A. Di Pietro, A. Ern, A. Linke and F. Schieweck; A discontinuous skeletal method for the viscosity-dependent Stokes problem, Computer Methods in Applied Mechanics and Engineering, 2016, 306, 175-195.

\bibitem{22}
D. A. Di Pietro and S. Krell; A Hybrid High-Order method for the steady incompressible Navier Stokes problem, Journal of Scientific Computing, 2018, 74(3), 1677-1705.


\bibitem{39}
K. Porwal and R. Singla; Pointwise adaptive non-conforming finite element method for the obstacle problem, Computational and Applied Mathematics, 2024, 43(150).

\bibitem{stamp}
G. Stampacchia; On some regular multiple integral problems in the calculus of variations, Journal of Mathematics and Mechanics, 1967, 16 (4), 875-908.

\bibitem{13}
A. Veeser; Efficient and Reliable $a~ posteriori$ error estimates for elliptic obstacle problems, SIAM Journal on Numerical Analysis, 2001, 39, 146-167.

\bibitem{16}
R. Verfurth; A review of $A~ Posteriori$ Error Estimation and Adaptive Mesh-Refinement Techniques, Wiley-Teubner, Stuttgart, Germany, 1996.

\bibitem{11}
A. Weiss, and B. I. Wohlmuth; $A~ posteriori$ error estimator and error control for contact problems, Mathematics of Computation, 2009, 78, 1237-1267.

\end{thebibliography}
\end{document}